\definecolor{LightCyan}{rgb}{0.70,1,1}
\newtheorem{definition}{Definition}
\newtheorem{theorem}{Theorem}
\newtheorem{remark}{Remark}
\newtheorem{proposition}{Proposition}
\theoremstyle{remark} 
\newcommand{\argmin}[1]{\mathchoice
    {\operatorname{arg}\, \underset{#1}{\operatorname{min}}\;}%
    {\operatorname{arg}\, \operatorname{min}_{#1}\;}%
    {\operatorname{arg}\, \operatorname{min}_{#1}\;}%
    {\operatorname{arg}\, \operatorname{min}_{#1}\;}%
}
\newcommand{\argmax}[1]{\mathchoice
    {\operatorname{arg}\, \underset{#1}{\operatorname{max}}\;}%
    {\operatorname{arg}\, \operatorname{max}_{#1}\;}%
    {\operatorname{arg}\, \operatorname{max}_{#1}\;}%
    {\operatorname{arg}\, \operatorname{max}_{#1}\;}%
}
\newcommand{\euclideannorm}[1]{ \left\Vert {#1} \right\Vert_2}
\newcommand{\quotes}[1]{``{#1}''}
\begin{document}

\title{A unified surrogate-based scheme for black-box and preference-based optimization}

\author{
    Davide Previtali\\
    Department of Computer Science Engineering\\
    University of Bergamo\\
    \texttt{davide.previtali@unibg.it}\\
    \And
    Mirko Mazzoleni\\
    Department of Computer Science Engineering\\
    University of Bergamo\\
    \texttt{mirko.mazzoleni@unibg.it}\\
    \And
    Antonio Ferramosca\\
    Department of Computer Science Engineering\\
    University of Bergamo\\
    \texttt{antonio.ferramosca@unibg.it}\\
    \And
    Fabio Previdi\\
    Department of Computer Science Engineering\\
    University of Bergamo\\
    \texttt{fabio.previdi@unibg.it}\\
}

\date{}

\maketitle

\begin{abstract}
    Black-box and preference-based optimization algorithms are global optimization procedures that aim to find the global solutions of an optimization problem using, respectively, the least amount of function evaluations or sample comparisons as possible. In the black-box case, the analytical expression of the objective function is unknown and it can only be evaluated through a (costly) computer simulation or an experiment. In the preference-based case, the objective function is still unknown but it corresponds to the subjective criterion of an individual. So, it is not possible to quantify such criterion in a reliable and consistent way. Therefore, preference-based optimization algorithms seek global solutions using only comparisons between couples of different samples, for which a human decision-maker indicates which of the two is preferred. Quite often, the black-box and preference-based frameworks are covered separately and are handled using different techniques. In this paper, we show that black-box and preference-based optimization problems are closely related and can be solved using the same family of approaches, namely surrogate-based methods. Moreover, we propose the generalized Metric Response Surface (\texttt{gMRS}) algorithm, an optimization scheme that is a generalization of the popular \texttt{MSRS} framework. Finally, we provide a convergence proof for the proposed optimization method.
\end{abstract}

\keywords{Global optimization, Black-box optimization, Preference-based optimization, Bayesian optimization.}

\section{Introduction}
\label{sec:Introduction}
In many applications there is the need to find the ``optimal'' value for a {decision variable}, i.e. the one that maximizes a measure of performance, minimizes some cost or best satisfies a human decision-maker's criterion. For instance, in the context of control systems, we might be interested in tuning the parameters of a controller to achieve some desired performance \cite{domanski2020control}.
However, in some cases, it might be impossible to objectively quantify the \quotes{goodness} of a certain decision variable. For instance, an evaluation of a controller performance might depend on a human operator, that expresses a judgement through visual inspection (or other sensory evaluations) of the behavior achieved by the system under control.
%
%
These optimization problems can be stated as: \textit{find the global solution}\footnote{In general, an optimization problem can have multiple global solutions. Here, we consider the case where only one global solution is present. We do not make any assumptions on the local optimizers, which can be more than one.} \textit{of an optimization problem} whose \textit{objective function} can either be: (i) completely known (i.e. its analytical expression is available), (ii) unknown but measurable or (iii) unknown and not objectively quantifiable. Further complications arise if the evaluation of the the objective function is \textit{expensive}, i.e. a non-negligible amount of resources needs to be spent to asses the \quotes{goodness} of a decision variable (for instance, its measure might require running a time-expensive computer simulation or performing experiments on a real system).
Depending on (i) the \textit{knowledge available} on the objective function, as well as (ii) \textit{how easy it is to acquire information} on it, different optimization frameworks should be employed, see Figure \ref{fig:Summary_optimization_frameworks}.
\begin{figure*}[!ht]
    \centering      \includegraphics[width=\textwidth]{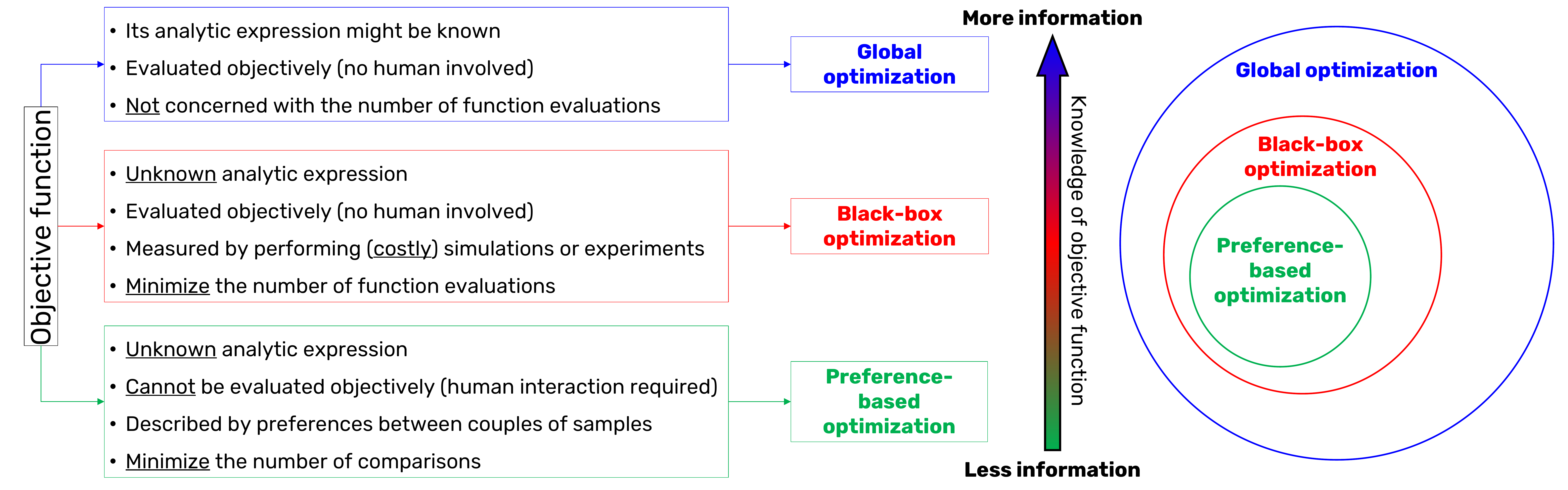}
    \caption{\label{fig:Summary_optimization_frameworks} Summary of the considered optimization frameworks.}
\end{figure*}

%
%

Whenever the objective function is known 
or is quite cheap to evaluate, it is best to employ \textit{global optimization} techniques, that can either be derivative-based \cite{nocedal2006numerical} or derivative-free \cite{rios2013derivative}. In the first case, it is possible to combine a derivative-based local search algorithm with a multi-start method \cite{Marti2018} to reach the global solution. Instead, derivative-free techniques are quite useful whenever the objective function is not differentiable or if the derivative information is unreliable (e.g. if it is obtained by finite differentiation of noisy measures). Some popular derivative-free algorithms are DIvide a hyper-RECTangle (\texttt{DIRECT}) \cite{jones1993lipschitzian}, Particle Swarm Optimization (\texttt{PSWARM}) \cite{vaz2007particle} and evolutionary algorithms \cite{hansen2001completely}.

The main drawback of the aforementioned techniques is the often excessive number of function evaluations required to find the global solution. This could be quite prohibitive when the objective function is \textit{unknown} and \textit{expensive} to measure.
%
When that is the case, a better suited class of algorithms are \textit{black-box optimization} techniques \cite{audet2017derivative}, which aim to both minimize the number of function evaluations and obtain the global optimizer.
A family of procedures within such framework is called \textit{surrogate-based} (or \textit{surface response}) \textit{methods}. These algorithms aim to both approximate the unknown objective function, using a so-called \textit{surrogate model}, and explore the domain of the decision variable sufficiently enough to converge to the global solution. In practice, such methods iteratively propose new samples to be evaluated by properly trading-off \textit{exploitation} (local search)
%
%
and \textit{exploration} (global search).
%
This is done by defining a suitable \textit{acquisition function}
and the next candidate sample is obtained by minimizing or maximizing it.
%
Some good and extensive surveys on the topic are \cite{vu2017surrogate, jones2001taxonomy}. The most popular surface response methods either approximate the black-box function using Gaussian Processes, giving rise to Bayesian Optimization \cite{brochu2010tutorial}, or through Radial Basis Functions, see for example the algorithm proposed by Gutmann (\texttt{Gutmann-RBF}) \cite{gutmann2001radial}, Constrained Optimization using Response Surfaces (\texttt{CORS}) \cite{regis2005constrained}, Metric Stochastic Response Surface Method (\texttt{MSRS}) \cite{Regis2007} and the more recent GLobal minimum using Inverse distance weighting and Surrogate radial basis functions (\texttt{GLIS}) \cite{Bemporad2020}.

When the objective function can only be \textit{evaluated subjectively}, or rather it describes a human decision-maker's criterion that cannot be expressed analytically, a possible way to solve the optimization problem consists of iteratively asking the user to compare couples of different samples, expressing \textit{preferences} between them. All the information that concerns the tastes of an individual is encapsulated in a \textit{preference relation}, which describes the outcomes of the comparisons.
%
%
%
%
There exist many fundamental results in \textit{utility theory} that, under some hypotheses, allow us to \textit{represent} the preference relation with a \textit{(latent) utility function} \cite{ok2011real}, i.e. a function that assigns an abstract degree of \quotes{goodness} to all possible values of the decision variable. In this case, the best sample for a human decision-maker is the one that has the highest utility. To find the maximizer of the utility function, it is possible to use \textit{(active) preference-based optimization} algorithms (sometimes referred to as active preference learning\footnote{We want to make a clear distinction between preference learning and preference-based optimization. The former aims to approximate the latent utility function \cite{furnkranz2010preference} with a predictive model, as commonly done in machine learning. Instead, the latter aims to find the global optimizer of an optimization problem using only the information brought by the preferences. In practice, many preference-based optimization methods still use a predictive model, yet its prediction accuracy is not the main concern.}), which also aim to minimize the number of pairwise comparisons. Surface response methods for preference-based optimization build a surrogate model for the latent utility function using the preferences expressed by the individual. Similarly to the black-box case, a suitable acquisition function needs to be defined in order to find the next candidate sample to evaluate.
%
%
%
Most preference-based optimization algorithms are extensions of Bayesian Optimization, see for example \cite{brochu2007active, gonzalez2017preferential, benavoli2021preferential}. Quite recently, the authors of \cite{Bemporad2020} proposed an extension of \texttt{GLIS}
in the preference-based framework, called \texttt{GLISp} \cite{Bemporad2021}, that is based on a radial basis function surrogate.

Global, black-box and preference-based optimization  are often treated separately in the literature. Moreover, a unified view for the resolution of these optimization problems has not yet been proposed. In this paper, we show how black-box and preference-based frameworks can be seen as \textit{particular cases of global optimization}, since they all aim to find the global solution of an optimization problem.
At the same time, \textit{preference-based optimization can be interpreted as an instance of black-box optimization}, where the objective function is both unknown (black-box) and cannot be measured explicitly.
Considering preference-based optimization as a specific case of black-box optimization can ease the definition of new algorithms for the former framework. Moreover, results and techniques applied for black-box procedures can be carried over to preference-based ones.
%
%
The main contributions of this work are:
\begin{enumerate}
    \item Provide a thorough \textit{comparison of black-box and preference-based optimization}, highlighting key similarities and differences, and show that, from an utility theory perspective, they both aim to solve the same optimization problem;
    \item Propose a \textit{general surrogate-based optimization scheme} that can be applied to both black-box and preference-based frameworks;
    \item Provide a \textit{proof of convergence} for such surrogate-based scheme. Notably, it is possible to prove the convergence in the preference-based case by leveraging results from the global optimization literature and the utility theory framework.
\end{enumerate}

The paper is organized as follows. Section \ref{sec:Problems_formulation}  introduces and compares the black-box and preference-based optimization problems. Section \ref{sec:Surrogate_models} describes two popular surrogate models, based on Radial Basis Functions and Gaussian Processes. Section \ref{sec:Handling_exploration_and_exploitation} proposes an acquisition function suited for both black-box and preference-based optimization, while
Section \ref{sec:General_optimization_scheme} provides a general surrogate-based optimization scheme, based on the proposed acquisition function. Its convergence is proven both in the black-box and preference-based frameworks. An example of the proposed optimization scheme is shown in Section \ref{sec:example}.
Finally, Section \ref{sec:Summary_and_discussion} is devoted to concluding remarks.
\section{Problems formulation}
\label{sec:Problems_formulation}
In this Section we are going to compare the black-box and preference-based optimization frameworks, showing how they both solve the same optimization problem using different information on the objective function.

\subsection{Black-box optimization}
The aim of black-box optimization is to solve the following global optimization problem:
\begin{align}
    \label{eq:black_box_optimization_problem_no_constraints}
    \boldsymbol{x}^{\boldsymbol{*}} & = \argmin{\boldsymbol{x}} f(\boldsymbol{x}) \\
    \text{s.t.}                     & \quad\boldsymbol{x}\in\Omega, \nonumber
\end{align}
where $\boldsymbol{x} = \begin{bmatrix}x^{(1)} & \ldots & x^{(n)}\end{bmatrix}^{\top} \in\mathbb{R}^{n}$ is the {decision variable}, $f:\mathbb{R}^{n}\to\mathbb{R}$ is a black-box cost function (unknown and expensive to evaluate) and $\Omega \subset \mathbb{R}^{n}$ is the \textit{constraint set} which, in its most general formulation, is given by
\begin{align}
    \label{eq:constraint_set_Omega}
    \Omega=\Bigl\{\boldsymbol{x}:\  & \boldsymbol{l}\leq\boldsymbol{x}\leq\boldsymbol{u},                          &  & \text{bounds} \nonumber                 \\
                                    & A_{ineq}\cdot\boldsymbol{x}\leq\boldsymbol{b_{ineq}},                        &  & \text{linear inequalities} \nonumber    \\
                                    & A_{eq}\cdot\boldsymbol{x}=\boldsymbol{b_{eq}},                               &  & \text{linear equalities}     \nonumber  \\
                                    & \boldsymbol{g_{ineq}}(\boldsymbol{x})\leq\boldsymbol{0}_{p_{ineq}}, &  & \text{nonlinear inequalities} \nonumber \\
                                    & \boldsymbol{g_{eq}}(\boldsymbol{x})=\boldsymbol{0}_{p_{eq}}\Bigr\}. &  & \text{nonlinear equalities}
\end{align}
In \eqref{eq:constraint_set_Omega}, $\boldsymbol{l}, \boldsymbol{u} \in \mathbb{R}^{n}$, $A_{ineq} \in \mathbb{R}^{q_{ineq} \times n}$, $\boldsymbol{b_{ineq}} \in \mathbb{R}^{q_{ineq}}$, $A_{eq} \in \mathbb{R}^{q_{eq} \times n}$, $\boldsymbol{b_{eq}} \in \mathbb{R}^{q_{eq}}$, $\boldsymbol{g_{ineq}}: \mathbb{R}^{n} \to \mathbb{R}^{p_{ineq}}$ and $\boldsymbol{g_{eq}}: \mathbb{R}^{n} \to \mathbb{R}^{p_{eq}}$. Notation-wise, $\boldsymbol{0}_{p_{ineq}}$ represents the ${p_{ineq}}$ zero column vector (and similarly for $\boldsymbol{0}_{p_{eq}}$). We suppose that: (i) all of these constraints are completely known and (ii) Problem \eqref{eq:black_box_optimization_problem_no_constraints} is, at least, bound constrained. If $\Omega$ is compact and $f\left(\boldsymbol{x}\right)$ is continuous, then Problem \eqref{eq:black_box_optimization_problem_no_constraints} admits a solution according to the Extreme Value Theorem \cite{audet2017derivative}.

Surrogate-based methods solve Problem \eqref{eq:black_box_optimization_problem_no_constraints} starting from a set $\mathcal{X}$ of $N$ \textit{distinct samples} of the decision variable, defined as:
\begin{equation}
    \label{eq:sample_set_X}
    \mathcal{X}=\left\{ \boldsymbol{x}_{i}:i=1,\ldots,N,\boldsymbol{x}_{i}\in\Omega,
    \boldsymbol{x}_{i} \neq \boldsymbol{x}_{j}, \forall i \neq j\right\},
\end{equation}
as well as the corresponding values assumed by the cost function at those samples.
In practice, the measure of $f\left(\boldsymbol{x}\right)$ could be affected by {noise}, which is assumed to be a zero-mean Gaussian white noise with variance $\sigma_{\eta}^2$. We define the \textit{set of measures} as:
\begin{equation}
    \label{eq:function_measure_set_Y}
    \mathcal{Y}=
    \left\{
    y_{i}:y_{i}=f\left(\boldsymbol{x}_{i}\right) + \eta_i,\boldsymbol{x}_{i}\in\mathcal{X},
    \eta_i \overset{i.i.d.}{\sim} \mathcal{N}\left(0, \sigma^2_\eta\right)
    \right\}.
\end{equation}
The cardinality of sets $\mathcal{X}$ and $\mathcal{Y}$ is $\left|\mathcal{X}\right|=\left|\mathcal{Y}\right|=N$.

\subsection{Preference-based optimization}
\label{subsec:problem_formulation_preference-based}
In the preference-based framework, there is no function $f\left(\boldsymbol{x}\right)$ to be measured explicitly. Instead, a human decision-maker expresses his/her preferences between couples of samples. A fundamental question to ask is:

\textit{
    Can an arbitrary criterion of an individual be \quotes{translated} into a mathematical function $f\left(\boldsymbol{x}\right)$ such that solving Problem \eqref{eq:black_box_optimization_problem_no_constraints} leads to finding his/her most preferred value for the decision variable?
}

To answer such question, we will now give a brief overview of some important results in \textit{utility theory} \cite{ok2011real}, which allow us to formalize the preference-based optimization framework.
Consider the constraint set $\Omega$  in \eqref{eq:constraint_set_Omega}, we define a generic binary relation $\mathcal{R}$ on $\Omega$ as a subset  $\mathcal{R} \subseteq \Omega \times \Omega$. Notation-wise, given two samples $\boldsymbol{x}_{i}, \boldsymbol{x}_{j} \in \Omega$, we denote the ordered pairs for which the binary relation holds, $\left(\boldsymbol{x}_{i}, \boldsymbol{x}_{j}\right) \in \mathcal{R}$, as $\boldsymbol{x}_{i} \mathcal{R} \boldsymbol{x}_{j}$ \cite{ok2011real}.

A \textit{preference relation}, \mbox{$\succsim \subseteq \Omega \times \Omega$}, is a preorder (a specific case of binary relation) which is commonly used to describe the tastes of an individual. In this context, $\boldsymbol{x}_{i} \succsim \boldsymbol{x}_{j}$ implies that a human decision-maker with preference relation $\succsim$ deems sample $\boldsymbol{x}_{i}$ \textit{at least as good} as $\boldsymbol{x}_{j}$. The fact that the preference relation is a preorder encompasses the \textit{rationality} of the individual, since the following properties hold:
\begin{enumerate}
    \item \textit{Reflexivity}, i.e. $\boldsymbol{x}_{i} \succsim \boldsymbol{x}_{i}, \forall \boldsymbol{x}_{i} \in \Omega$ (any alternative is as good as itself),
    \item \textit{Transitivity}, i.e. $\forall \boldsymbol{x}_{i}, \boldsymbol{x}_{j}, \boldsymbol{x}_{k} \in \Omega$, if $\boldsymbol{x}_{i} \succsim \boldsymbol{x}_{j}$ and $\boldsymbol{x}_{j} \succsim \boldsymbol{x}_{k}$ hold, then $\boldsymbol{x}_{i} \succsim \boldsymbol{x}_{k}$ (consistency of the preferences expressed by the individual).
\end{enumerate}
The preference relation $\succsim$ is usually \quotes{split} into two transitive binary relations:
\begin{itemize}
    \item The \textit{strict preference relation} $\succ$ on $\Omega$, i.e.
          $\boldsymbol{x}_{i} \succ \boldsymbol{x}_{j}$ if and only if
          $\boldsymbol{x}_{i} \succsim \boldsymbol{x}_{j}$ but not
          $\boldsymbol{x}_{j} \succsim \boldsymbol{x}_{i}$
          ($\boldsymbol{x}_{i}$ is \quotes{better than} $\boldsymbol{x}_{j}$), and
    \item The \textit{indifference relation} $\sim$ on $\Omega$, i.e.
          $\boldsymbol{x}_{i} \sim \boldsymbol{x}_{j}$ if and only if
          $\boldsymbol{x}_{i} \succsim \boldsymbol{x}_{j}$ and
          $\boldsymbol{x}_{j} \succsim \boldsymbol{x}_{i}$
          ($\boldsymbol{x}_{i}$ is \quotes{as good as} $\boldsymbol{x}_{j}$).
\end{itemize}
Another common assumption on $\succsim$
is that it is a \textit{complete} binary relation, i.e. either $\boldsymbol{x}_{i} \succsim \boldsymbol{x}_{j}$ or $\boldsymbol{x}_{j} \succsim \boldsymbol{x}_{i}$ hold $\forall \boldsymbol{x}_{i}, \boldsymbol{x}_{j} \in \Omega$. Completeness of $\succsim$ implies that the human decision-maker is never uncertain, that is he/she is always able to express a preference between any couple of samples. One last relevant property for $\succsim$ is \textit{continuity}. Here, we avoid a formal definition of the continuity of a binary relation \cite{ok2011real} but, intuitively, if $\succsim$ is continuous and $\boldsymbol{x}_i \succ \boldsymbol{x}_j$, then an alternative $\boldsymbol{x}_k$ which is \quotes{very close} to $\boldsymbol{x}_j$ should also be deemed strictly worse than $\boldsymbol{x}_i$.

Having defined the preference relation $\succsim$, the goal of preference-based optimization is to find the $\succsim$-maximum of $\Omega$, i.e. the sample $\boldsymbol{x}^* \in \Omega$ such that $\boldsymbol{x}^* \succsim \boldsymbol{x}, \forall \boldsymbol{x} \in \Omega$ (the most preferred by the individual).
Concerning the existence of $\boldsymbol{x}^*$, we can state the following Proposition, which can be seen as a generalization of the Extreme Value Theorem \cite{audet2017derivative} for preference relations.
\begin{proposition}[Existence of a $\succsim$-maximum of $\Omega$ \cite{ok2011real}]
    \label{prop:existence_of_preference_relation_maximum}
    A $\succsim$-maximum of  $\Omega$ is guaranteed to exist if $\Omega$ is a compact subset of a metric space (in our case $\Omega \subset \mathbb{R}^{n}$)   and $\succsim$ is a continuous and complete preference relation on $\Omega$.
\end{proposition}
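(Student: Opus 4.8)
The plan is to reduce the existence of a $\succsim$-maximum to a compactness argument that mirrors the proof of the Extreme Value Theorem. Two routes are available. The first invokes a utility representation theorem (e.g. Debreu's): since $\succsim$ is a complete and continuous preference relation on the separable metric space $\Omega \subseteq \mathbb{R}^{n}$, there exists a continuous function $u : \Omega \to \mathbb{R}$ representing it, so that $\boldsymbol{x}_{i} \succsim \boldsymbol{x}_{j}$ if and only if $u(\boldsymbol{x}_{i}) \geq u(\boldsymbol{x}_{j})$. One then applies the classical Extreme Value Theorem to $u$ on the compact set $\Omega$ to obtain a maximizer $\boldsymbol{x}^{*}$, and translates $u(\boldsymbol{x}^{*}) \geq u(\boldsymbol{x})$ back into $\boldsymbol{x}^{*} \succsim \boldsymbol{x}$ for all $\boldsymbol{x} \in \Omega$, which is exactly the definition of a $\succsim$-maximum.

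I would prefer the second, self-contained route, which bypasses the representation theorem and argues directly with open covers. First I would record the topological content of continuity: for each $\boldsymbol{y} \in \Omega$ the strict lower contour set $U_{\boldsymbol{y}} = \{\boldsymbol{x} \in \Omega : \boldsymbol{y} \succ \boldsymbol{x}\}$ is open in $\Omega$ (equivalently, by completeness, it is the complement of the closed weak upper contour set $\{\boldsymbol{x} : \boldsymbol{x} \succsim \boldsymbol{y}\}$). Then I would argue by contradiction: suppose $\Omega$ has no $\succsim$-maximum, so for every $\boldsymbol{x} \in \Omega$ there is some $\boldsymbol{y}$ with $\boldsymbol{y} \succ \boldsymbol{x}$, hence $\boldsymbol{x} \in U_{\boldsymbol{y}}$. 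The collection $\{U_{\boldsymbol{y}} : \boldsymbol{y} \in \Omega\}$ therefore covers $\Omega$, and by compactness a finite subcover $U_{\boldsymbol{y}_{1}}, \ldots, U_{\boldsymbol{y}_{m}}$ suffices.

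The last step is where completeness and transitivity do the work. Among the finitely many points $\boldsymbol{y}_{1}, \ldots, \boldsymbol{y}_{m}$, completeness lets me compare every pair and transitivity lets me single out a $\succsim$-greatest element $\boldsymbol{y}_{k}$, i.e. $\boldsymbol{y}_{k} \succsim \boldsymbol{y}_{j}$ for all $j$. Since $\boldsymbol{y}_{k} \in \Omega$ is covered, $\boldsymbol{y}_{k} \in U_{\boldsymbol{y}_{j}}$ for some $j$, which means $\boldsymbol{y}_{j} \succ \boldsymbol{y}_{k}$, contradicting $\boldsymbol{y}_{k} \succsim \boldsymbol{y}_{j}$. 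This contradiction establishes the existence of the maximum.

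The main obstacle I anticipate is the first step of the direct route: making precise the claim that continuity of the preorder $\succsim$ is exactly the statement that the strict contour sets are open, since the excerpt only gives an informal notion of continuity. I would either adopt the standard order-topological definition (closedness of the weak contour sets) as the working hypothesis, citing \cite{ok2011real}, or fall back on the representation-theorem route, where continuity is packaged directly into the continuity of $u$. The remaining steps, namely the open-cover extraction and the selection of a greatest element from a finite set, are routine.
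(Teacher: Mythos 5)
Your proposal is correct on both routes; note, however, that the paper contains no proof of this proposition at all --- it is imported as a known result from \cite{ok2011real} and used only as an ingredient in the convergence proof of Theorem~\ref{theo:Convergence_of_gMRS} --- so the comparison is against the paper's toolkit rather than an in-paper argument. Your first route (utility representation plus Extreme Value Theorem) is exactly the machinery the paper keeps at hand: Theorem~\ref{theo:debreu_utility_representation} (Debreu) is stated immediately after the proposition, and the paper itself describes the proposition as a generalization of the Extreme Value Theorem for preference relations, so this route makes that remark literal; there is no circularity, since Debreu's theorem does not rely on the proposition. Your second, self-contained route is essentially the textbook proof in \cite{ok2011real}, usually phrased in its dual form: by continuity the weak upper contour sets $\left\{\boldsymbol{x} \in \Omega : \boldsymbol{x} \succsim \boldsymbol{y}\right\}$ are closed, by your finite-greatest-element argument (completeness plus transitivity) they have the finite intersection property, hence by compactness their total intersection over $\boldsymbol{y} \in \Omega$ is nonempty, and any point of that intersection is a $\succsim$-maximum; your open-cover contradiction is precisely the contrapositive of this. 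The one point you rightly flag --- what ``continuity'' of $\succsim$ means formally --- is resolved as you suggest: take closedness of the weak contour sets as the definition (as in \cite{ok2011real}), and then completeness makes your strict lower contour set $U_{\boldsymbol{y}}$ the complement of a closed set, hence open. The only residual caveat, which the paper also leaves implicit, is that $\Omega$ must be nonempty for either route to produce anything.
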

Proposition \ref{prop:existence_of_preference_relation_maximum} allows us to prove the convergence of the proposed optimization scheme (Section \ref{sec:General_optimization_scheme}) in the preference-based case. One of the most important results in utility theory is the following Theorem.
\begin{theorem}[Debreu's Utility Representation Theorem for $\mathbb{R}^{n}$ \cite{debreu1959theory}]
    \label{theo:debreu_utility_representation}
    Let $\Omega$ be any nonempty subset of $\mathbb{R}^{n}$ and $\succsim$ be a complete preference relation on $\Omega$. If $\succsim$ is continuous, then it can be represented by a continuous utility function
    $u_{\scriptscriptstyle \succsim}:\Omega \to \mathbb{R}$ such that, $\forall \boldsymbol{x}_i, \boldsymbol{x}_j \in \Omega$:
    \begin{equation*}
        \boldsymbol{x}_i \succsim \boldsymbol{x}_j \quad \text{if and only if} \quad
        u_{\scriptscriptstyle \succsim}\left(\boldsymbol{x}_i\right) \geq u_{\scriptscriptstyle \succsim}\left(\boldsymbol{x}_j\right).
    \end{equation*}
    Moreover, we have that:
    \begin{align*}
        \begin{split}
            \boldsymbol{x}_i \succ \boldsymbol{x}_j \quad &\text{if and only if} \quad
            u_{\scriptscriptstyle \succsim}\left(\boldsymbol{x}_i\right) > u_{\scriptscriptstyle \succsim}\left(\boldsymbol{x}_j\right), \\
            \boldsymbol{x}_i \sim \boldsymbol{x}_j \quad &\text{if and only if} \quad
            u_{\scriptscriptstyle \succsim}\left(\boldsymbol{x}_i\right) = u_{\scriptscriptstyle \succsim}\left(\boldsymbol{x}_j\right).
        \end{split}
    \end{align*}
\end{theorem}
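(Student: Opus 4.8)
The plan is to prove the two assertions in stages: first produce \emph{some} order-preserving numerical representation of $\succsim$, and only afterwards upgrade it to a \emph{continuous} one. The engine behind the existence part is that $\Omega \subseteq \mathbb{R}^{n}$ is a separable metric space, so it admits a countable dense subset $D = \{\boldsymbol{d}_1, \boldsymbol{d}_2, \ldots\}$. First I would record the topological content of continuity of $\succsim$: for every $\boldsymbol{a} \in \Omega$ the strict contour sets $\{\boldsymbol{x} : \boldsymbol{x} \succ \boldsymbol{a}\}$ and $\{\boldsymbol{x} : \boldsymbol{a} \succ \boldsymbol{x}\}$ are open, equivalently the weak contour sets are closed. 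This is the only place where continuity of the relation enters the existence argument.

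The key intermediate step is to extract from $D$ a countable set $Z = \{\boldsymbol{z}_1, \boldsymbol{z}_2, \ldots\}$ that is \emph{order dense}, in the sense that whenever $\boldsymbol{y} \succ \boldsymbol{x}$ there is some $\boldsymbol{z}_k \in Z$ with $\boldsymbol{y} \succsim \boldsymbol{z}_k \succ \boldsymbol{x}$. I would argue this by cases. If some alternative lies strictly between $\boldsymbol{x}$ and $\boldsymbol{y}$, then the open set $\{\boldsymbol{w} : \boldsymbol{y} \succ \boldsymbol{w}\} \cap \{\boldsymbol{w} : \boldsymbol{w} \succ \boldsymbol{x}\}$ is nonempty and, by density, meets $D$, supplying the desired $\boldsymbol{z}_k$. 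If instead $(\boldsymbol{x}, \boldsymbol{y})$ is a \emph{jump}, with nothing strictly between, I would add $\boldsymbol{y}$ itself to $Z$, so that $\boldsymbol{y} \succsim \boldsymbol{y} \succ \boldsymbol{x}$ does the job. The crux here is that $\succsim$ admits at most countably many jumps, a standard consequence of separability: at a jump a weak upper-contour set coincides with a strict one and is therefore clopen, and a second-countable space supports only countably many such nested contour sets. Taking $Z$ to be $D$ together with these countably many representatives yields the order-dense set.

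With $Z$ in hand I would define the representation by a summable series, $u_{\scriptscriptstyle \succsim}(\boldsymbol{x}) = \sum_{k : \boldsymbol{x} \succsim \boldsymbol{z}_k} 2^{-k}$, and verify the equivalence. Monotonicity is immediate from transitivity: if $\boldsymbol{x} \succsim \boldsymbol{y}$, then the index set summed for $\boldsymbol{y}$ is contained in that for $\boldsymbol{x}$, so $u_{\scriptscriptstyle \succsim}(\boldsymbol{x}) \geq u_{\scriptscriptstyle \succsim}(\boldsymbol{y})$. For the converse I would use completeness and order density: if $\boldsymbol{x} \succsim \boldsymbol{y}$ fails, then $\boldsymbol{y} \succ \boldsymbol{x}$, and the order-dense $\boldsymbol{z}_k$ with $\boldsymbol{y} \succsim \boldsymbol{z}_k \succ \boldsymbol{x}$ contributes to $u_{\scriptscriptstyle \succsim}(\boldsymbol{y})$ but not to $u_{\scriptscriptstyle \succsim}(\boldsymbol{x})$, forcing $u_{\scriptscriptstyle \succsim}(\boldsymbol{y}) > u_{\scriptscriptstyle \succsim}(\boldsymbol{x})$. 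The stated characterizations of $\succ$ and $\sim$ then follow as formal corollaries of the $\succsim$ equivalence.

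The hard part is continuity, since the series representation need not be continuous: its range can contain gaps that spoil the level sets. To repair this I would invoke Debreu's Open Gap Lemma to post-compose $u_{\scriptscriptstyle \succsim}$ with a strictly increasing map $g : u_{\scriptscriptstyle \succsim}(\Omega) \to \mathbb{R}$ whose image has no nondegenerate gaps, replacing $u_{\scriptscriptstyle \succsim}$ by $g \circ u_{\scriptscriptstyle \succsim}$, which is still a valid representation because $g$ is strictly increasing. Continuity would then be checked by showing that each level set $\{\boldsymbol{x} : u_{\scriptscriptstyle \succsim}(\boldsymbol{x}) < c\}$ and $\{\boldsymbol{x} : u_{\scriptscriptstyle \succsim}(\boldsymbol{x}) > c\}$ is open; the gap-free range ensures every threshold $c$ is realized through a point of $\Omega$, reducing openness of these sets to openness of the strict contour sets of $\succsim$ established at the outset. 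Reconciling the order topology induced by $\succsim$ with the metric topology of $\Omega$ at these thresholds is where the real care is required, and is the main obstacle of the proof.
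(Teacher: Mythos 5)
The paper does not prove this statement at all: it is quoted verbatim as a classical result, with the proof deferred to the cited literature (Debreu 1959; see also Ok's text, the paper's main utility-theory reference). So there is no in-paper argument to compare against; what you have written is a reconstruction of the standard textbook proof --- separability of $\Omega \subseteq \mathbb{R}^{n}$, a countable order-dense subset, the dyadic-sum representation $u_{\scriptscriptstyle \succsim}(\boldsymbol{x}) = \sum_{k :\, \boldsymbol{x} \succsim \boldsymbol{z}_k} 2^{-k}$, and Debreu's Open Gap Lemma to repair continuity --- and as a sketch this is the right route and structurally sound.

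Two steps, however, are stated in a way that would not survive being written out. First, your countability-of-jumps argument rests on the claim that a second-countable space ``supports only countably many nested clopen sets''; that general principle is false: in $\mathbb{Q}$ the sets $\left\{ q \in \mathbb{Q} : q < r \right\}$ for irrational $r$ form an uncountable nested family of clopen sets. The conclusion you need (countably many jumps) is still true, but the correct argument is pointwise, not set-theoretic: fix a countable base $\left\{ B_m \right\}$; for each jump pair $\boldsymbol{y} \succ \boldsymbol{x}$ with nothing strictly between, choose $B_m$ with $\boldsymbol{y} \in B_m \subseteq \left\{ \boldsymbol{w} : \boldsymbol{w} \succ \boldsymbol{x} \right\}$, and check that two non-indifferent jumps cannot receive the same $B_m$ (the upper partner of the lower jump would have to lie strictly above the upper jump's lower partner, a contradiction). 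Second, you misquote the Gap Lemma: it does not produce an image with ``no nondegenerate gaps'' --- that is impossible in general, e.g.\ when $\Omega$ splits into two preference-separated connected components every representation has a genuine gap in its range --- it produces a strictly increasing $g$ such that every gap of $g\left(u_{\scriptscriptstyle \succsim}(\Omega)\right)$ is an \emph{open} interval. That weaker conclusion is exactly what your final continuity check needs: a threshold $c$ lying in an open gap $(a,b)$ satisfies $\left\{ \boldsymbol{x} : v(\boldsymbol{x}) < c \right\} = \left\{ \boldsymbol{x} : v(\boldsymbol{x}) < b \right\}$ with $b$ attained, so openness reduces to openness of a strict contour set, and thresholds that are two-sided limit points of the range are handled by unions of such sets. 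With those two repairs your outline is the standard complete proof.
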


Using Theorem \ref{theo:debreu_utility_representation}, we can build an optimization
problem to find the $\succsim$-maximum of $\Omega$ as
\begin{align}
    \label{eq:preference_relation_maximum_optimization_problem}
    \boldsymbol{x}^{\boldsymbol{*}} & = \argmax{\boldsymbol{x}} u_{\scriptscriptstyle \succsim}(\boldsymbol{x}) \\
    \text{s.t.}                     & \quad\boldsymbol{x}\in\Omega, \nonumber
\end{align}
which is equivalent to Problem \eqref{eq:black_box_optimization_problem_no_constraints} by setting \mbox{$f(\boldsymbol{x}) = - u_{\scriptscriptstyle \succsim}(\boldsymbol{x})$}.
To avoid confusion, we refer to $f(\boldsymbol{x})$ in the preference-based framework
as the \textit{scoring function} and, similarly to the black-box case, its analytical formulation is unknown.

\begin{remark}
    Formally, $f\left(\boldsymbol{x}\right)$ in the black-box framework and $u_{\scriptscriptstyle \succsim}\left(\boldsymbol{x}\right)$ in the preference-based one have different domains
    ($\mathbb{R}^{n}$ and $\Omega$ respectively).
    However, assuming that $u_{\scriptscriptstyle \succsim}\left(\boldsymbol{x}\right)$ is continuous and $\Omega$ is a compact subset of $\mathbb{R}^{n}$
    (which are either results or assumptions of Proposition \ref{prop:existence_of_preference_relation_maximum}
    and Theorem \ref{theo:debreu_utility_representation}),
    then there exists a continuous extensions of $u_{\scriptscriptstyle \succsim}\left(\boldsymbol{x}\right)$ with domain $\mathbb{R}^{n}$ (Tietze Extension Theorem \cite{kelley2017general}).
\end{remark}

Instead of considering the preference relation explicitly (or the utility theory framework), most preference-based optimization algorithms define an (unknown) \textit{preference function} $\pi:\mathbb{R}^{n} \times \mathbb{R}^{n} \to \{-1,0,1\}$ which describes the output of the comparison between two samples.
Here, we consider $\pi\left(\boldsymbol{x}_{i},\boldsymbol{x}_{j}\right)$\footnote{
    There exist different formulations of the preference function. For example, the authors of \cite{gonzalez2017preferential} define $\pi\left(\boldsymbol{x}_{i},\boldsymbol{x}_{j}\right)$ as the probability of $\boldsymbol{x}_{i}$ being preferred over $\boldsymbol{x}_{j}$.
} as defined in \cite{Bemporad2021}:
\begin{equation}
    \label{eq:preference_function}
    \pi(\boldsymbol{x}_{i},\boldsymbol{x}_{j})=\begin{cases}
        -1 & \text{if }f\left(\boldsymbol{x}_{i}\right)<f\left(\boldsymbol{x}_{j}\right)\Longleftrightarrow\text{if }\boldsymbol{x}_{i} \succ \boldsymbol{x}_{j} \\
        0  & \text{if }f\left(\boldsymbol{x}_{i}\right)=f\left(\boldsymbol{x}_{j}\right)\Longleftrightarrow\text{if }\boldsymbol{x}_{i} \sim \boldsymbol{x}_{j}  \\
        1  & \text{if }f\left(\boldsymbol{x}_{i}\right)>f\left(\boldsymbol{x}_{j}\right)\Longleftrightarrow\text{if }\boldsymbol{x}_{j} \succ \boldsymbol{x}_{i}
    \end{cases}.
\end{equation}

The preference function \eqref{eq:preference_function} is obtained from the utility representation of the binary relation $\succsim$ (see Theorem \ref{theo:debreu_utility_representation})
and from the fact that $f(\boldsymbol{x}) = - u_{\scriptscriptstyle \succsim}(\boldsymbol{x})$.
Reflexivity and transitivity of the preorder $\succsim$ are highlighted
by the following properties of $\pi\left(\boldsymbol{x}_{i},\boldsymbol{x}_{j}\right)$:
\begin{enumerate}
    \item $\pi(\boldsymbol{x}_{i},\boldsymbol{x}_{i}) = 0, \forall \boldsymbol{x}_{i} \in \mathbb{R}^{n}$,
    \item $\pi(\boldsymbol{x}_{i},\boldsymbol{x}_{j}) = \pi(\boldsymbol{x}_{j},\boldsymbol{x}_{k}) = b \Rightarrow \pi(\boldsymbol{x}_{i},\boldsymbol{x}_{k}) = b$,
          $\forall \boldsymbol{x}_{i}, \boldsymbol{x}_{j}, \boldsymbol{x}_{k} \in \mathbb{R}^{n}$.
\end{enumerate}

In the context of preference-based optimization, surrogate-based methods aim to find the $\succsim$-maximum of $\Omega$ (by solving Problem \eqref{eq:preference_relation_maximum_optimization_problem} which is equivalent to Problem \eqref{eq:black_box_optimization_problem_no_constraints}) starting from a set of samples $\mathcal{X}$  as defined in \eqref{eq:sample_set_X}, and a set of $M$ \textit{preferences} expressed by the human decision-maker
\begin{equation}
    \label{eq:Set_of_preferences_B}
    \mathcal{B}=\left\{ b_{h}:h=1,\ldots,M,b_{h}\in\{-1,0,1\}\right\}.
\end{equation}
$b_h$ in \eqref{eq:Set_of_preferences_B} is the $h$-th preference obtained by comparing a certain couple of samples, as highlighted by the following \textit{mapping set}:
\begin{align}
    \label{eq:Mapping_set_for_preferences_S}
    \mathcal{S}=\Big\{\left(\ell(h),\kappa(h)\right): \  & h=1,\ldots,M,\ell(h),\kappa(h)\in\mathbb{N}, \nonumber                                       \\
                                                         & b_{h}=\pi\left(\boldsymbol{x}_{\ell(h)},\boldsymbol{x}_{\kappa(h)}\right), \nonumber         \\
                                                         & b_{h}\in\mathcal{B},\boldsymbol{x}_{\ell(h)},\boldsymbol{x}_{\kappa(h)}\in\mathcal{X}\Big\},
\end{align}
where $\ell: \mathbb{N} \to \mathbb{N}$ and $\kappa: \mathbb{N} \to \mathbb{N}$ are two mapping functions
that associate the indexes of the samples, contained inside $\mathcal{X}$,
to their respective preferences in $\mathcal{B}$.
This time, the cardinalities are $\left|\mathcal{X}\right|=N$
and $\left|\mathcal{B}\right|=\left|\mathcal{S}\right|=M$. Also note that
$1\leq M\leq\begin{pmatrix}N \\
        2
    \end{pmatrix}$.

Table \ref{tab:summary_table_black_box_vs_preference_based} summarizes the formulations of the black-box and preference-based optimization problems.

\begin{table*}[!ht]
    \centering
    \resizebox{\textwidth}{!}{ 
        \def\arraystretch{1.35} 
        \begin{tabular}{|c|c|c|c|}
            \cline{3-4} \cline{4-4}
            \multicolumn{1}{c}{}                       &                                                                                                                            & \multicolumn{2}{c|}{\textbf{Information used to build surrogate model} $\hat{f}\left(\boldsymbol{x}\right)$}\tabularnewline
            \cline{2-4} \cline{3-4} \cline{4-4}
            \multicolumn{1}{c|}{}                      & \textbf{Problem to solve}                                                                                                  & \textbf{Set of samples}                                                                                                     & \textbf{Information on} $f\left(\boldsymbol{x}\right)$\tabularnewline
            \hline
            \multirow{1}{*}{\textbf{Black-box}}        & \multirow{1}{*}{$\boldsymbol{x}^{\boldsymbol{*}}=\argmin{\boldsymbol{x}\in\Omega}f(\boldsymbol{x})$}                       & \multirow{5}{*}{$\mathcal{X}$}                                                                                              & Function measures: $\mathcal{Y}$\tabularnewline
            \cline{1-2} \cline{2-2} \cline{4-4}
            \multirow{4}{*}{\textbf{Preference-based}} & find $\boldsymbol{x}^{\boldsymbol{*}} \in \Omega$ such that $\boldsymbol{x^{*}}\succsim\boldsymbol{x},\forall\boldsymbol{x}\in\Omega$ &                                                                                                                             & \tabularnewline
                                                       & ($\succsim$-maximum of $\Omega$)                                                                                           &                                                                                                                             & Expressed preferences: $\mathcal{B}$\tabularnewline
                                                       & $\downarrow$                                                                                                               &                                                                                                                             & Preference mapping: $\mathcal{S}$\tabularnewline
                                                       & $\boldsymbol{x}^{\boldsymbol{*}}=\argmin{\boldsymbol{x}\in\Omega}f(\boldsymbol{x})$                                        &                                                                                                                             & \tabularnewline
            \hline
        \end{tabular}
    }
    \caption{\label{tab:summary_table_black_box_vs_preference_based} Summary of the information used to solve black-box and preference-based optimization problems. }
\end{table*}
\section{Surrogate models}
\label{sec:Surrogate_models}
In the context of surrogate-based methods, a \textit{surrogate model} $\hat{f}:\mathbb{R}^{n} \to \mathbb{R}$ is an approximation of the black-box cost function or the scoring function $f\left(\boldsymbol{x}\right)$ that is (usually) inexpensive to evaluate. Its objective is to drive the optimization algorithm towards candidate samples that are minimizers of $f\left(\boldsymbol{x}\right)$. The most commonly used surrogate models are based either on Radial Basis Functions (RBFs) or Gaussian Processes (GPs). In this Section, we show how both models can be used to approximate either the black-box cost function or the scoring function.

\subsection{Surrogates based on Radial Basis Functions}
\label{subsec:Surrogate_models_RBF}
In this case, the surrogate model is defined by a \textit{radial basis function expansion} \cite{fasshauer2007meshfree} as
\begin{align}
    \label{eq:RBF_surrogate_model}
    \hat{f}\left(\boldsymbol{x}\right) & = \sum_{i=1}^{N}\beta^{(i)}\cdot\varphi\left(\epsilon\cdot\euclideannorm{\boldsymbol{x}-\boldsymbol{x}_{i}}\right) \\
                                       & = \boldsymbol{\phi}\left(\boldsymbol{x}\right)^{\top}\cdot\boldsymbol{\beta}, \nonumber
\end{align}
where $\varphi:\mathbb{R}_{\geq 0}\to\mathbb{R}$ is a properly chosen \textit{radial function} \cite{fornberg2015primer}, $\boldsymbol{\phi}\left(\boldsymbol{x}\right) \in \mathbb{R}^{N}$ is the \textit{radial basis function vector},
\begin{equation*}
    \boldsymbol{\phi}\left(\boldsymbol{x}\right)=
    \begin{bmatrix}
        \varphi\big(\epsilon\cdot\euclideannorm{\boldsymbol{x}-\boldsymbol{x}_{1}}\big) &
        \ldots                                                                             &
        \varphi\big(\epsilon\cdot\euclideannorm{\boldsymbol{x}-\boldsymbol{x}_{N}}\big)
    \end{bmatrix}^{\top},
\end{equation*}
$\epsilon \in \mathbb{R}_{>0}$ is the so-called \textit{shape parameter} (which needs to be tuned) and
$\boldsymbol{\beta}=\begin{bmatrix}\beta^{(1)} & \ldots & \beta^{(N)}\end{bmatrix}^{\top}\in\mathbb{R}^{N}$
is a {vector of weights} that has to be computed from data at hand.

\subsubsection{Black-box optimization}
In the context of black-box optimization, especially if the measures of
$f\left(\boldsymbol{x}\right)$ in $\mathcal{Y}$ are noiseless, it is desirable to have a surrogate model that interpolates the given points. That is because, as the number of samples increases,
$\hat{f}(\boldsymbol{x})$ gets sufficiently expressive to capture where the
global minimizer of $f\left( \boldsymbol{x} \right)$ is located \cite{jones2001taxonomy}.
To do so, we enforce the \textit{interpolation conditions} and calculate $\boldsymbol{\beta}$ in \eqref{eq:RBF_surrogate_model}
by solving the following linear system:
\begin{equation}
    \label{eq:Beta_computation_RBF_black_box_optimization_linear_system}
    \Phi\cdot\boldsymbol{\beta} =\boldsymbol{y},
\end{equation}
where $\Phi\in\mathbb{R}^{N\times N}$ is a symmetric matrix whose $(i,j)$-th element is
$\Phi^{(i,j)} = \varphi\left(\epsilon\cdot\euclideannorm{\boldsymbol{x}_{i} -\boldsymbol{x}_{j}}\right)$
and $\boldsymbol{y}\in\mathbb{R}^{N}$ is a vector which contains the entries of set $\mathcal{Y}$, i.e.
\mbox{$\boldsymbol{y} = \begin{bmatrix} y_1 & \ldots & y_N \end{bmatrix}^{\top}$}. 
 
The matrix $\Phi$ might be singular depending on the choice of the radial function and on the points contained in $\mathcal{X}$ \cite{gutmann2001radial}.
Moreover, the shape parameter $\epsilon$ as well as the number and the distribution of the samples $\boldsymbol{x}_i \in \mathcal{X}$ affect the condition number of $\Phi$ \cite{fasshauer2007meshfree,rippa1999algorithm}.
In \cite{Bemporad2020}, the authors propose to solve the linear system in 
\eqref{eq:Beta_computation_RBF_black_box_optimization_linear_system} using a {low-rank approximation}
of $\Phi$.
Alternatively, a polynomial function of a certain degree can be added to the surrogate model  \eqref{eq:RBF_surrogate_model}, ensuring the existence of a unique interpolant \cite{gutmann2001radial}.

\subsubsection{Preference-based optimization}
In the context of preference-based optimization, a surrogate model $\hat{f}\left(\boldsymbol{x}\right)$ can be used to define the \textit{surrogate preference function} $\hat{\pi}:{\mathbb{R}^{n}\times\mathbb{R}^{n}\to\{-1,0,1\}}$. Differently from $\pi(\boldsymbol{x}_{i},\boldsymbol{x}_{j})$ in \eqref{eq:preference_function}, we consider a tolerance $\sigma \in \mathbb{R}_{>0}$ to avoid using strict inequalities and equalities and define $\hat{\pi}(\boldsymbol{x}_{i},\boldsymbol{x}_{j})$ as \cite{Bemporad2021}:
\begin{equation}
    \label{eq:RBF_surrogate_preference_function}
    \hat{\pi}(\boldsymbol{x}_{i},\boldsymbol{x}_{j})=\begin{cases}
        -1 & \text{if }\hat{f}\left(\boldsymbol{x}_{i}\right)-\hat{f}\left(\boldsymbol{x}_{j}\right)\leq-\sigma             \\
        0  & \text{if }\left|\hat{f}\left(\boldsymbol{x}_{i}\right)-\hat{f}\left(\boldsymbol{x}_{j}\right)\right|\leq\sigma \\
        1  & \text{if }\hat{f}\left(\boldsymbol{x}_{i}\right)-\hat{f}\left(\boldsymbol{x}_{j}\right)\geq\sigma
    \end{cases}.
\end{equation}
Instead of enforcing the interpolation conditions, \textit{we are interested in a surrogate preference function that correctly describes the preferences expressed in $\mathcal{B}$ and $\mathcal{S}$}.
This, in turn, translates into constraints on the surrogate model $\hat{f}\left(\boldsymbol{x}\right)$, which can be used to find $\boldsymbol{\beta}$ in  \eqref{eq:RBF_surrogate_model}. In order to do so, the authors of \cite{Bemporad2021} define
the following optimization problem:
\begin{align}
    \label{eq:Beta_computation_RBF_preference_based_optimization}
                & \quad \argmin{\boldsymbol{\varepsilon},\boldsymbol{\beta}}
    \frac{\lambda}{2} \cdot \boldsymbol{\beta}^{\top} \cdot \boldsymbol{\beta} + \boldsymbol{g}^\top \cdot \boldsymbol{\varepsilon}                                                                                                                                                   \\
    \text{s.t.} & \quad\hat{f}\left(\boldsymbol{x}_{\ell(h)}\right)-\hat{f}\left(\boldsymbol{x}_{\kappa(h)}\right)\leq-\sigma+\varepsilon^{(h)}                                                                                                        & \forall h:b_{h}=-1 \nonumber \\
                & \quad\left|\hat{f}\left(\boldsymbol{x}_{\ell(h)}\right)-\hat{f}\left(\boldsymbol{x}_{\kappa(h)}\right)\right|\leq\sigma+\varepsilon^{(h)}                                                                                            & \forall h:b_{h}=0  \nonumber \\
                & \quad\hat{f}\left(\boldsymbol{x}_{\ell(h)}\right)-\hat{f}\left(\boldsymbol{x}_{\kappa(h)}\right)\geq\sigma-\varepsilon^{(h)}                                                                                                         & \forall h:b_{h}=1 \nonumber  \\
                & \quad\boldsymbol{\varepsilon}\geq\boldsymbol{0}_{M}                                                                                                                                                                 \nonumber                                \\
                & \quad h=1,\ldots,M, \nonumber
\end{align}
where $\boldsymbol{\varepsilon} = \begin{bmatrix} \varepsilon^{(1)} & \ldots & \varepsilon^{(M)} \end{bmatrix}^{\top} \in\mathbb{R}^{M}$ is a vector of \textit{slack variables} (one for each preference), 
$\boldsymbol{g} = \begin{bmatrix} g^{(1)} & \ldots & g^{(M)} \end{bmatrix}^{\top} \in \mathbb{R}^{M}_{>0}$ 
is a vector of weights and $\lambda \in \mathbb{R}_{\geq 0}$ plays the role of a \textit{regularization parameter}. 

Problem \eqref{eq:Beta_computation_RBF_preference_based_optimization} ensures that, at least approximately, $\hat{f}\left(\boldsymbol{x}\right)$ is a suitable representation of the unknown preference relation $\succsim$ which generated the data (see Theorem \ref{theo:debreu_utility_representation}). 
The slacks $\boldsymbol{\varepsilon}$ are added because the surrogate model might not be complex enough to describe the given preferences, or in case some of them are expressed inconsistently by the individual. 
In practice, Problem \eqref{eq:Beta_computation_RBF_preference_based_optimization}
can be employed for any choice of $\hat{f}\left(\boldsymbol{x}\right)$ which depends upon some parameters vector $\boldsymbol{\beta}$. 
If the surrogate model is linear in $\boldsymbol{\beta}$ (such as the one in \eqref{eq:RBF_surrogate_model}), 
then Problem \eqref{eq:Beta_computation_RBF_preference_based_optimization} is a convex Quadratic Program (QP) for $\lambda > 0$ 
or a Linear Program (LP) for $\lambda = 0$ \cite{Bemporad2021}.

\subsection{Surrogates based on Gaussian Processes}
\label{subsec:Surrogate_models_GP}
In this case, we impose a Gaussian Process (GP) \cite{rasmussen2006gaussian} \textit{prior distribution} on the unknown cost function as
\begin{equation}
    \label{eq:GP_prior_on_cost_function}
    f\left(\boldsymbol{x}\right)\sim\mathcal{GP}\left(0,k\left(\boldsymbol{x}_i,\boldsymbol{x}_j\right)\right).
\end{equation}
The mean of the GP is assumed to be the zero function and $k\left(\boldsymbol{x}_i,\boldsymbol{x}_j\right)$ is a suitable \textit{kernel}\footnote{The radial functions $\varphi\left(\epsilon\cdot\euclideannorm{\boldsymbol{x}_i -\boldsymbol{x}_{j}}\right)$ used in  \eqref{eq:RBF_surrogate_model} are suitable kernels.} (or {covariance function}) which  
possibly depends on some {hyperparameters}.
Under the assumption in \eqref{eq:GP_prior_on_cost_function}, the probability associated to the latent values $\boldsymbol{f} = \begin{bmatrix} f\left(\boldsymbol{x}_1\right) & \ldots & f\left(\boldsymbol{x}_N\right)  \end{bmatrix}^{\top} \in \mathbb{R}^{N}$
assumed by $f\left(\boldsymbol{x}\right)$ at the sampled points in $\mathcal{X}$ is
\begin{equation}
    \label{eq:GP_prior_on_f}
    p\left(\boldsymbol{f}\right) = \mathcal{N}\left(\boldsymbol{0}_{N},K\right),
\end{equation}
where $K\in\mathbb{R}^{N\times N}$ is a symmetric matrix whose $(i,j)$-th entry is $K^{(i,j)} = k\left(\boldsymbol{x}_i, \boldsymbol{x}_j \right)$ and $\mathcal{N}\left(\boldsymbol{\mu}_{\mathcal{N}}, \Sigma_\mathcal{N}\right)$ is a Gaussian distribution with mean $\boldsymbol{\mu}_{\mathcal{N}}$ and covariance $\Sigma_\mathcal{N}$.

Based on the specific optimization framework, a suitable \textit{likelihood} which describes the dataset at hand, i.e. either $\mathcal{Y}$ or $\mathcal{B}$ and $\mathcal{S}$, needs to be defined. Here, we will denote the likelihood as $p\left(\mathcal{D}|\boldsymbol{f}, \mathcal{X}\right)$, where $\mathcal{D}$ indicates a generic dataset, containing either the function measures or the preferences. Once $p\left(\mathcal{D}|\boldsymbol{f}, \mathcal{X}\right)$ has been defined, it is possible to marginalize it with respect to $\boldsymbol{f}$ to obtain the \textit{marginal likelihood} $p\left(\mathcal{D}|\mathcal{X}\right)$. The latter is often used to recalibrate the hyperparameters of the kernel \cite{rasmussen2006gaussian}. 
Finally, using Bayes' Theorem \cite{bishop2006pattern}, we can calculate the \textit{posterior distribution} $p\left(\boldsymbol{f}|\mathcal{D}, \mathcal{X}\right)$ and, more importantly, the
\textit{predictive distribution} $p\left(\tilde{f}|\mathcal{D}, \mathcal{X}, \boldsymbol{\tilde{x}}\right)$, $\tilde{f} = f\left(\boldsymbol{\tilde{x}}\right)$, whose mean can be used as the surrogate model $\hat{f}\left(\boldsymbol{x}\right)$.

\subsubsection{Black-box optimization}
In the black-box framework, given that $y_{i}=f\left(\boldsymbol{x}_{i}\right) + \eta_i$ as reported in  \eqref{eq:function_measure_set_Y} and $\eta_i$ is a realization of a Gaussian white noise,  the likelihood is 
\begin{equation}
    \label{eq:GP_likelihood_black-box}
    p\left(\mathcal{D}|\boldsymbol{f}, \mathcal{X}\right) = \mathcal{N}\left(\boldsymbol{f},\sigma_{\eta}^{2}\cdot I_{N\times N}\right),
\end{equation}
where $I_{N \times N}$ is the $N \times N$ identity matrix.
Using the properties of Gaussian distributions \cite{rasmussen2006gaussian}, it is possible to compute the expression of the predictive distribution in closed form  as
\begin{subequations}
\begin{align}
    \label{eq:GP_predictive_distribution_black-box}
    &p\left(\tilde{f}|\mathcal{D}, \mathcal{X}, \boldsymbol{\tilde{x}}\right) =
    \mathcal{N}
    \left(
        \mu_{\tilde{f}}, 
		\Sigma_{\tilde{f}}
    \right),  \\
\mu_{\tilde{f}} &= \boldsymbol{k}\left(\boldsymbol{\tilde{x}}\right)^{\top}\cdot\left[K + \sigma_{\eta}^{2}\cdot I_{N\times N}\right]^{-1}\cdot\boldsymbol{y}, \\
\Sigma_{\tilde{f}} &=  k\left(\boldsymbol{\tilde{x}},\boldsymbol{\tilde{x}}\right)-\boldsymbol{k}\left(\boldsymbol{\tilde{x}}\right)^{\top} \left[K+\sigma_{\eta}^{2} \cdot I_{N\times N}\right]^{-1} \boldsymbol{k}\left(\boldsymbol{\tilde{x}}\right),
\end{align}
\end{subequations}
where
$\boldsymbol{k}\left(\boldsymbol{\tilde{x}}\right) =
\begin{bmatrix}
    k\left(\boldsymbol{x}_1, \boldsymbol{\tilde{x}} \right) &
    \ldots                                                                             &
    k\left(\boldsymbol{x}_N, \boldsymbol{\tilde{x}} \right)
\end{bmatrix}^{\top}
\in \mathbb{R}^{N}
$
is the \textit{kernel vector}. The surrogate model is the expected value of the predictive distribution in \eqref{eq:GP_predictive_distribution_black-box}, which can be written as
\begin{equation}
    \label{eq:GP_surrogate_model}
    \hat{f}\left(\boldsymbol{x}\right) = \boldsymbol{k}\left(\boldsymbol{x}\right)^{\top}\cdot\boldsymbol{\beta}
\end{equation}

Notice that \eqref{eq:GP_surrogate_model} it is quite similar to  \eqref{eq:RBF_surrogate_model}, but this time $\boldsymbol{\beta}=\left[K+\sigma_{\eta}^{2}\cdot I_{N\times N}\right]^{-1}\boldsymbol{y}$.

In practice, $\sigma^2_\eta$ is unknown and needs to be estimated from data.
If data is assumed to be noiseless ($\sigma^2_\eta = 0$) then, provided that $K$ is nonsingular, $\hat{f}\left(\boldsymbol{x}\right)$ in \eqref{eq:GP_surrogate_model} interpolates the samples in $\mathcal{X}$ and $\mathcal{Y}$.

\subsubsection{Preference-based optimization}
Gaussian Processes have also been employed in the context of preference learning and preference-based optimization. A widely used likelihood is proposed in \cite{chu2005preference}, 
where the authors only consider the strict preference relation $\succ$ instead of $\succsim$ (the indifference relation $\sim$ is not handled explicitly).
Under this assumption, it is possible to define the mapping functions $\ell(h),\kappa(h)$ in \eqref{eq:Mapping_set_for_preferences_S} so that 
$$\boldsymbol{x}_{\ell(h)} \succ \boldsymbol{x}_{\kappa(h)}, \quad \forall h = 1, \ldots, M, $$
making the set $\mathcal{B}$ in \eqref{eq:Set_of_preferences_B} redundant.
Additionally, the scoring function $f\left(\boldsymbol{x}\right)$ is assumed to be
affected by a Gaussian white noise noise 
$\eta_{\ell\left(h\right)} \overset{i.i.d.}{\sim} \mathcal{N}\left(0, \sigma^2_\eta\right)$, i.e. 
$$y_{\ell\left(h\right)} = f \left(\boldsymbol{x}_{\ell(h)}\right) + \eta_{\ell\left(h\right)},$$
and similarly for those values indexed by $\kappa(h)$.
Then, $\boldsymbol{x}_{\ell(h)} \succ \boldsymbol{x}_{\kappa(h)}$ whenever 
$y_{\ell\left(h\right)} < y_{\kappa\left(h\right)}$.
The  noise is used to capture possible inconsistencies in the preferences expressed by the individual 
(similarly to the role of the slacks in Problem \eqref{eq:Beta_computation_RBF_preference_based_optimization}).
The likelihood proposed in \cite{chu2005preference} reads as
\begin{align}
    \label{eq:GP_likelihood_preference-based}
    p\left(\mathcal{D}|\boldsymbol{f}, \mathcal{X}\right) &= \prod_{h=1}^{M} p\left(y_{\ell\left(h\right)}<y_{\kappa\left(h\right)}|f_{\ell\left(h\right)},f_{\kappa\left(h\right)}, \boldsymbol{x}_{\ell\left(h\right)}, \boldsymbol{x}_{\kappa\left(h\right)} \right) \nonumber\\
    &=\prod_{h=1}^{M}\Phi_{\mathcal{N}}\left(\frac{f_{\kappa\left(h\right)}-f_{\ell\left(h\right)}}{\sqrt{2}\cdot\sigma_{\eta}}\right), 
\end{align}
where $\Phi_{\mathcal{N}}\left(\cdot\right)$ is the standard cumulative normal distribution.
In this case, it is not possible to obtain the posterior distribution in closed form. Instead, 
the authors of \cite{chu2005preference} resort to its Laplace Approximation \cite{bishop2006pattern},
which requires solving an additional optimization problem to find the \textit{Maximum A Posteriori} (MAP)
estimate of the latent function values, $\boldsymbol{f}_{MAP} \in \mathbb{R}^{n}$.
In particular, 
$$p\left(\boldsymbol{f}| \mathcal{D}, \mathcal{X}\right) \approx 
\mathcal{N}\left(\boldsymbol{f}_{MAP}, \left[K^{-1}+\Lambda_{MAP}\right]^{-1}\right),$$ 
where $\Lambda_{MAP} \in \mathbb{R}^{N \times N}$ is the Hessian of the negative log likelihood 
$- \ln p\left(\mathcal{D}|\boldsymbol{f}, \mathcal{X}\right)$ evaluated at $\boldsymbol{f}_{MAP}$. 
Finally,  the predictive distribution can be obtained using the Laplace Approximation of the posterior distribution:
\begin{subequations}
\begin{align}
    \label{eq:GP_predictive_distribution_preference-based}
    &p\left(\tilde{f}|\mathcal{D}, \mathcal{X}, \boldsymbol{\tilde{x}}\right) =
    \mathcal{N}
    \left(
        \mu_{\tilde{f}}, 
        \Sigma_{\tilde{f}}   
    \right),  \\
\mu_{\tilde{f}} &= \boldsymbol{k}\left(\boldsymbol{\tilde{x}}\right)^{\top}\cdot K^{-1}\cdot\boldsymbol{f}_{MAP}, \\
\Sigma_{\tilde{f}}  &=     
k\left(\boldsymbol{\tilde{x}},\boldsymbol{\tilde{x}}\right)-\boldsymbol{k}\left(\boldsymbol{\tilde{x}}\right)^{\top}\cdot\left[K+\Lambda_{MAP}^{-1}\right]^{-1}\cdot\boldsymbol{k}\left(\boldsymbol{\tilde{x}}\right).
\end{align}
\end{subequations}
The surrogate model is the expected value of the predictive distribution, which can be written as in \eqref{eq:GP_surrogate_model} with $\boldsymbol{\beta} = K^{-1}\cdot\boldsymbol{f}_{MAP}$.

\section{Handling exploration and exploitation}
\label{sec:Handling_exploration_and_exploitation}
As previously mentioned, surrogate-based methods iteratively propose new samples to try with the aim of solving Problem \eqref{eq:black_box_optimization_problem_no_constraints}, while also minimizing the number of costly evaluations/comparisons. Suppose that, \textit{at iteration $k$}, we have at our disposal the set of samples $\mathcal{X}$,  $\left|\mathcal{X}\right| = N$, and either set $\mathcal{Y}$ or sets $\mathcal{B}$ and $\mathcal{S}$. We denote the best sample found so far by the procedure (i.e. the one that either achieved the lowest function value or that is preferred by the user) as
\begin{align*}
     & \boldsymbol{x_{best}}\left(N\right) \in \mathbb{R}^{n}, \boldsymbol{x_{best}}\left(N\right) \in \mathcal{X}, \left| \mathcal{X}\right| = N, & \text{such that} \\
     & \boldsymbol{x_{best}}\left(N\right) = \argmin{\boldsymbol{x}_i \in \mathcal{X}} y_i, y_i \in \mathcal{Y}    & \text{or}        \\
     & \boldsymbol{x_{best}}\left(N\right) \succsim \boldsymbol{x}_i, \forall \boldsymbol{x}_i \in \mathcal{X}.    &
\end{align*}
The new candidate sample,
$$\boldsymbol{x}_{N+1} \in \mathbb{R}^{n}, \boldsymbol{x}_{N+1} \notin \mathcal{X},$$
is obtained by solving an additional optimization problem:
\begin{align}
    \label{eq:Next_sample_search_no_black-box_constraints}
    \boldsymbol{x}_{N+1} & = \argmin{\boldsymbol{x}} a(\boldsymbol{x}) \\
    \text{s.t.}          & \quad\boldsymbol{x}\in\Omega, \nonumber
\end{align}
where $a: \mathbb{R}^{n} \to \mathbb{R}$ is a properly defined \textit{acquisition function} which trades off exploration and exploitation.
Once $\boldsymbol{x}_{N+1}$ has been computed:
\begin{itemize}
    \item In the black-box optimization case, we measure the black-box function
          at the new sample, obtaining $y_{N+1} = f\left(\boldsymbol{x}_{N+1}\right) + \eta_{N+1}$;
    \item In the preference-based framework, we let the user express a preference
          between the best sample found so far and the new one, obtaining
          $b_{M+1} = \pi\left(\boldsymbol{x}_{N+1},\boldsymbol{x_{best}}\left(N\right)\right)$.
\end{itemize}
In both cases, $\boldsymbol{x}_{N+1}$ is added to the set $\mathcal{X}$ and,
similarly, $\mathcal{Y}, \mathcal{B}$ and $\mathcal{S}$ are also updated
with either $y_{N+1}$ or $b_{M+1}$. The
process is iterated until a certain condition is met. Usually, a \textit{budget}, or
rather a maximum number of samples to evaluate $N_{max}$, is set and the procedure
is stopped once it is reached.

In this work, $a\left(\boldsymbol{x}\right)$ is defined starting from a surrogate model  and an \textit{exploration function} $z: \mathbb{R}^{n} \to \mathbb{R}$ which leads the optimization procedure towards regions of $\Omega$ where few samples have been tried and/or where the surrogate model is most uncertain. We assume that both $\hat{f}(\boldsymbol{x})$ and $z(\boldsymbol{x})$ are continuous functions.
The acquisition function that we adopt here is an explicit trade-off between these two functions:
\begin{equation}
    \label{eq:Acquisition_function_no_black_box_constraints_v1}
    a(\boldsymbol{x}) = \delta\cdot\frac{\hat{f}(\boldsymbol{x}) - \hat{f}_{min}\left( \mathcal{X}_{aug}\right)}{\Delta\hat{F}\left( \mathcal{X}_{aug}\right)}
    +\left(1 - \delta\right)\cdot\frac{z(\boldsymbol{x}) - z_{min}\left( \mathcal{X}_{aug}\right)}{\Delta Z\left( \mathcal{X}_{aug}\right)},
\end{equation}
where:
\begin{itemize}
    \item $\delta \in \left[0, 1\right]$ is a parameter which defines the \textit{exploration-exploitation trade-off}.
    \item $\hat{f}\left(\boldsymbol{x}\right)$ and $z\left(\boldsymbol{x}\right)$ have  been rescaled using \textit{min-max normalization} \cite{han2011data} in order to make them assume the same range $\left[0, 1\right]$ (or, at least, make them comparable).
          In particular, given any function $h:\mathbb{R}^{n} \to \mathbb{R}$ and a set of  samples $\mathcal{X}_{aug}$, we define
          \begin{subequations}
              \begin{align}
                  h_{min}\left(\mathcal{X}_{aug}\right)  & = \min_{\boldsymbol{x} \in \mathcal{X}_{aug}}h(\boldsymbol{x}),                \\
                  h_{max}\left(\mathcal{X}_{aug}\right)  & = \max_{\boldsymbol{x} \in \mathcal{X}_{aug}}h(\boldsymbol{x}),                \\
                  \Delta H\left(\mathcal{X}_{aug}\right) & = h_{max}\left(\mathcal{X}_{aug}\right)-h_{min}\left(\mathcal{X}_{aug}\right).
              \end{align}
          \end{subequations}
          Note that, to avoid dividing by zero in \eqref{eq:Acquisition_function_no_black_box_constraints_v1}, $\Delta H\left(\mathcal{X}_{aug}\right)$ can be set to $h_{max}\left(\mathcal{X}_{aug}\right)$ or $1$ whenever
          $h_{min}\left(\mathcal{X}_{aug}\right) = h_{max}\left(\mathcal{X}_{aug}\right) \neq 0$ or
          $h_{min}\left(\mathcal{X}_{aug}\right) = h_{max}\left(\mathcal{X}_{aug}\right) = 0$ respectively.

    \item $\mathcal{X}_{aug} = \left\{\boldsymbol{x_{aug}}_{i}: i = 1, \ldots, N_{aug}, \boldsymbol{x}_{{aug}_i} \in \Omega \right\}$ is the so called \textit{augmented sample set}, which needs to be defined so that
          \begin{subequations}
              \begin{align}
                  \hat{f}_{min}\left(\mathcal{X}_{aug}\right) & \approx \min_{\boldsymbol{x} \in \Omega} \hat{f}(\boldsymbol{x}), \\
                  \hat{f}_{max}\left(\mathcal{X}_{aug}\right) & \approx \max_{\boldsymbol{x} \in \Omega} \hat{f}(\boldsymbol{x}), \\
                  z_{min}\left(\mathcal{X}_{aug}\right)       & \approx \min_{\boldsymbol{x} \in \Omega} z(\boldsymbol{x}),       \\
                  z_{max}\left(\mathcal{X}_{aug}\right)       & \approx \max_{\boldsymbol{x} \in \Omega} z(\boldsymbol{x}). 
              \end{align}
          \end{subequations}
          In practice, this means that $\mathcal{X}_{aug}$ needs to be sufficiently expressive to allow for a proper comparison between the surrogate model and the exploration function in \eqref{eq:Acquisition_function_no_black_box_constraints_v1}. There are different ways to obtain the augmented sample set. The most accurate (and expensive) one would be to solve four additional optimization problems to find the minimizers and maximizers of $\hat{f}(\boldsymbol{x})$ and $z(\boldsymbol{x})$. Alternatively, as it has been done for \texttt{MSRS} \cite{Regis2007}, the augmented sample set can be obtained by randomly sampling $\Omega$.
          If a-priori knowledge on the stationary points of $\hat{f}(\boldsymbol{x})$ and/or $z(\boldsymbol{x})$ is available, then it can be used to build $\mathcal{X}_{aug}$, see for example \cite{previtali2022glispr}.
          Finally, a possible choice is $\mathcal{X}_{aug} = \mathcal{X}$, however it is not recommended because, as we will see in Section \ref{subsec:Exploration_functions}, $z\left(\boldsymbol{x}\right)$ is usually maximal at the sampled points. Therefore, $\mathcal{X}$ is not expressive enough to rescale the exploration function.
\end{itemize}
%

As a final note, $a\left(\boldsymbol{x}\right)$ in \eqref{eq:Acquisition_function_no_black_box_constraints_v1} is often a multimodal function. Therefore,
global optimization procedures need to be employed to solve Problem \eqref{eq:Next_sample_search_no_black-box_constraints}.
However, compared to the black-box cost function or the interaction with the individual, $a\left(\boldsymbol{x}\right)$ is cheap to evaluate and therefore we are not particularly concerned with its number of function evaluations.

\begin{remark}
    The acquisition function \eqref{eq:Acquisition_function_no_black_box_constraints_v1} can be seen as a generalized version of the one proposed in \texttt{MSRS} \cite{Regis2007}, where the function $z\left(\boldsymbol{x}\right)$ is fixed a-priori.
    Instead, the proposed $a\left(\boldsymbol{x}\right)$ in \eqref{eq:Acquisition_function_no_black_box_constraints_v1} can use any (proper) $z\left(\boldsymbol{x}\right)$.
    Moreover, \eqref{eq:Acquisition_function_no_black_box_constraints_v1} will be employed in the proposed general optimization scheme for both black-box and preference-based problems.
    Instead, algorithm \texttt{MSRS} \cite{Regis2007} deals only with black-box problems.
\end{remark}

\subsection{Exploration functions}
\label{subsec:Exploration_functions}
In Section \ref{sec:Surrogate_models}, we showed different models that can be used as surrogates for the acquisition function \eqref{eq:Acquisition_function_no_black_box_constraints_v1}.
Here, we define possible exploration functions $z\left(\boldsymbol{x}\right)$ that are suited for \eqref{eq:Acquisition_function_no_black_box_constraints_v1}.

The aim of $z\left(\boldsymbol{x}\right)$ is to drive the optimization procedure towards regions of $\Omega$ where few samples are present.
To do so, the exploration function must use the information available at the current iteration, i.e. $\mathcal{X}$ and,
possibly but not necessarily, either the measures of the cost function $\mathcal{Y}$ or the preferences in $\mathcal{B}$ and $\mathcal{S}$.
We provide the following Definition to highlight which functions $z\left(\boldsymbol{x}\right)$ are suitable to
be used as an exploration function for \eqref{eq:Acquisition_function_no_black_box_constraints_v1}.
\begin{definition}[Proper exploration function] \label{def:Proper_exploration_function}
    Suppose that $\Omega$ is a compact subset of $\mathbb{R}^{n}$.
    Then, a function  $z:\mathbb{R}^{n} \to \mathbb{R}$ is a proper exploration function if it is continuous and the solution of Problem \eqref{eq:Next_sample_search_no_black-box_constraints} with $\delta = 0$,
    or equivalently
    \begin{align}
        \label{eq:Next_sample_search_only_exploration_no_black-box_constraints}
        \boldsymbol{x}_{N+1} & = \argmin{\boldsymbol{x}} z(\boldsymbol{x}) \\
        \text{s.t.}          & \quad\boldsymbol{x}\in\Omega, \nonumber
    \end{align}
    is not already present in $\mathcal{X}$, i.e. $\boldsymbol{x}_{N+1} \notin \mathcal{X}$.
\end{definition}
Compactness of $\Omega$ and continuity of $z\left(\boldsymbol{x}\right)$ ensure that Problem
\eqref{eq:Next_sample_search_only_exploration_no_black-box_constraints} has at least one solution. If instead it has multiple solutions, then at least one of them must not be in $\mathcal{X}$.

An exploration function could also depend on the choice of the surrogate model. For instance, if $\hat{f}\left(\boldsymbol{x}\right)$ is obtained by imposing a GP prior on $f\left(\boldsymbol{x}\right)$, then we can use the \textit{negative standard deviation} of the predictive distribution as exploration function, namely
\begin{equation}
    \label{eq:Exploration_function_std_GP_black-box}
    z\left(\boldsymbol{x}\right) = - \sqrt{k\left(\boldsymbol{x},\boldsymbol{x}\right)-\boldsymbol{k}\left(\boldsymbol{x}\right)^{\top}\cdot\left[K+\sigma_{\eta}^{2}\cdot I_{N\times N}\right]^{-1}\cdot\boldsymbol{k}\left(\boldsymbol{x}\right)}
\end{equation}
in the black-box case \eqref{eq:GP_predictive_distribution_black-box} and
\begin{equation}
    \label{eq:Exploration_function_std_GP_preference-based}
    z\left(\boldsymbol{x}\right) = - \sqrt{k\left(\boldsymbol{x},\boldsymbol{x}\right)-\boldsymbol{k}\left(\boldsymbol{x}\right)^{\top}\cdot\left[K+\Lambda_{MAP}^{-1}\right]^{-1}\cdot\boldsymbol{k}\left(\boldsymbol{x}\right)}
\end{equation}
in the preference-based one \eqref{eq:GP_predictive_distribution_preference-based}.

The functions $z\left(\boldsymbol{x}\right)$ in \eqref{eq:Exploration_function_std_GP_black-box}
and in \eqref{eq:Exploration_function_std_GP_preference-based} are continuous if
the chosen kernel function $k\left(\cdot, \boldsymbol{x}\right)$ is continuous. Moreover, the variance of the predictive distribution is \textit{minimal} at the sampled values in $\mathcal{X}$ \cite{rasmussen2006gaussian},
%
while it assumes higher values where the surrogate model is most uncertain.
Therefore, $z\left(\boldsymbol{x}\right)$ in \eqref{eq:Exploration_function_std_GP_black-box} and in   \eqref{eq:Exploration_function_std_GP_preference-based} are proper exploration functions.

Alternative exploration functions that are not related to the surrogate model $\hat{f}\left(\boldsymbol{x}\right)$ exist. For example, the authors of \texttt{GLIS} \cite{Bemporad2020} proposed the \textit{Inverse Distance Weighting} (IDW) \textit{distance function}:
\begin{equation}
    \label{eq:Inverse_distance_weighting_distance_function}
    z\left(\boldsymbol{x}\right)=\begin{cases}
        0                                                                                                 & \text{if }\boldsymbol{x}\in\mathcal{X} \\
        {-\frac{2}{\pi}}\cdot\arctan\left(\frac{1}{\sum_{i=1}^{N}w_{i}\left(\boldsymbol{x}\right)}\right) & \text{otherwise}
    \end{cases},
\end{equation}
where $w_{i}:\mathbb{R}^{n}\setminus\left\{ \boldsymbol{x}_{i}\right\} \to\mathbb{R}_{>0}$,
$w_{i}\left(\boldsymbol{x}\right)=\cfrac{1}{\euclideannorm{\boldsymbol{x}-\boldsymbol{x}_{i}}^{2}}$, is the IDW function \cite{shepard1968two}.
In \cite{Bemporad2020}, the authors also prove that $z\left(\boldsymbol{x}\right)$ is differentiable everywhere on $\mathbb{R}^{n}$ and hence it is continuous.
Another exploration function is the one used in \texttt{MSRS} \cite{Regis2007}:
\begin{equation}
    \label{eq:Exploration_function_MSRS}
    z\left(\boldsymbol{x}\right) = - \min_{\boldsymbol{x}_i \in \mathcal{X}} \euclideannorm{\boldsymbol{x} - \boldsymbol{x}_i},
\end{equation}
which is continuous since it is the composition of continuous functions.
Both $z\left(\boldsymbol{x}\right)$ in \eqref{eq:Inverse_distance_weighting_distance_function}
and in \eqref{eq:Exploration_function_MSRS} are zero only at $\boldsymbol{x}_i \in \mathcal{X}$ and assume negative values $\forall \boldsymbol{x} \notin \mathcal{X}$. Thus, they are proper exploration functions.

\subsection{Relationship to other surrogate-based algorithms}
\label{subsec:Relationship_with_other_algorithms}
Often, acquisition functions based on explicit trade-offs between a surrogate
model and an exploration function exhibit the following structure:
\begin{equation}
    \label{eq:general_acquisition_function_explicit_trade-off}
    a\left(\boldsymbol{x}\right) = \hat{f}\left(\boldsymbol{x}\right) +
    \alpha \cdot z\left(\boldsymbol{x}\right),
\end{equation}
where $\alpha \in \mathbb{R}$ is a suitable coefficient that can be varied in between iterations of the optimization procedure.
The proposed acquisition function \eqref{eq:Acquisition_function_no_black_box_constraints_v1} belongs to this rationale. It is possible to prove that, for $\delta \neq 0$, \eqref{eq:Acquisition_function_no_black_box_constraints_v1} has the same minimizer as:
\begin{equation}
    \label{eq:Acquisition_function_no_black_box_constraints_v2}
    a(\boldsymbol{x})= \hat{f}(\boldsymbol{x}) +
    \frac{1-\delta}{\delta} \cdot \frac{\Delta\hat{F}\left(\mathcal{X}_{aug}\right)}{\Delta Z\left(\mathcal{X}_{aug}\right)} \cdot z(\boldsymbol{x}).
\end{equation}
For some specific choices of $\hat{f}\left(\boldsymbol{x}\right)$ and $z\left(\boldsymbol{x}\right)$, the proposed acquisition functions $a\left(\boldsymbol{x}\right)$ in \eqref{eq:Acquisition_function_no_black_box_constraints_v1}
or \eqref{eq:Acquisition_function_no_black_box_constraints_v2} can be seen as a generalization of the acquisition functions used by some other popular surrogate-based methods, like

\begin{enumerate}
    \item \texttt{MSRS} \cite{Regis2007} is a black-box optimization algorithm which uses the same acquisition function \eqref{eq:Acquisition_function_no_black_box_constraints_v1}, does not make any assumption on the surrogate model
          $\hat{f}\left(\boldsymbol{x}\right)$, and adopts the exploration function \eqref{eq:Exploration_function_MSRS}.
          Moreover, the points in $\mathcal{X}_{aug}$ are generated randomly and, instead of explicitly solving Problem \eqref{eq:Next_sample_search_no_black-box_constraints},
          the new candidate sample is selected as
          $$\boldsymbol{x}_{N+1} = \argmin{\boldsymbol{x} \in \mathcal{X}_{aug}}
              a\left( \boldsymbol{x} \right).$$
    \item In the context of Bayesian Optimization, a popular acquisition function is the so called \textit{Lower Confidence Bound} (often referred to as \texttt{GP-LCB}) \cite{brochu2010tutorial},
          which can be obtained by using the acquisition function \eqref{eq:general_acquisition_function_explicit_trade-off} with $\hat{f}\left(\boldsymbol{x}\right)$
          defined as in Section \ref{subsec:Surrogate_models_GP} and $z\left(\boldsymbol{x}\right)$ as \eqref{eq:Exploration_function_std_GP_black-box}
          or \eqref{eq:Exploration_function_std_GP_preference-based}, depending on the optimization framework.
          In practice, $\alpha$ in \eqref{eq:general_acquisition_function_explicit_trade-off}
          for \texttt{GP-LCB} \cite{brochu2010tutorial} is often kept
          constant throughout the whole optimization procedure.
    \item In the preference-based framework, algorithm \texttt{GLISp} \cite{Bemporad2021} uses a RBF surrogate model \eqref{eq:RBF_surrogate_model}
          and $z\left(\boldsymbol{x}\right)$ as in \eqref{eq:Inverse_distance_weighting_distance_function}.
          Its acquisition function is defined as
          $$a(\boldsymbol{x})=\frac{\hat{f}\left(\boldsymbol{x}\right)}{\Delta\hat{F}\left(\mathcal{X}\right)}+\alpha\cdot z\left(\boldsymbol{x}\right),$$
          which has the same minimizer as the one in  \eqref{eq:Acquisition_function_no_black_box_constraints_v2}
          for $\mathcal{X}_{aug} = \mathcal{X}$ and a proper choice of $\delta$.
          %
\end{enumerate}

\subsection{Choosing the trade-off parameter}
\label{subsec:Greedy_delta_cycling}
Many black-box optimization algorithms {explicitly vary the exploration-exploitation trade-off in between the iterations of the procedure}. Just to cite a few:
\begin{itemize}
    \item \texttt{Gutmann-RBF} \cite{gutmann2001radial} uses an acquisition function that is a measure of ``bumpiness'' of the RBF surrogate, which depends upon a target value $t$ to aim for.
          The values of $t$ are cycled between   two extrema   to alternate between local and global search.
    \item The authors of \texttt{MSRS} \cite{Regis2007}, which uses the acquisition function   \eqref{eq:Acquisition_function_no_black_box_constraints_v1} with $z\left(\boldsymbol{x}\right)$ as in \eqref{eq:Exploration_function_MSRS},  propose to cycle between different values of $\delta$ as to prioritize exploration or exploitation more.
    \item In algorithm \texttt{SO-SA} \cite{wang2014general}, which is a revisitation of \texttt{MSRS} \cite{Regis2007}, the weight $\delta$ is chosen in a random fashion at each iteration. Moreover, the authors adopt a greedy strategy, i.e. the trade-off is kept unaltered until it fails to find a significantly better solution.
    \item In the context of Bayesian optimization, a popular way to find the next candidate sample is to maximize the \textit{Probability of Improvement}, which is defined as
          $$
              p\left(f\left(\boldsymbol{x}\right) \leq \hat{f}_{min}\left(\mathcal{X}\right) - \xi\right) =
              \Phi_{\mathcal{N}}\left(\frac{\hat{f}_{min}\left( \mathcal{X}\right) - \xi - \hat{f}\left(\boldsymbol{x}\right)}{-z\left(\boldsymbol{x}\right)}\right). \nonumber
          $$
          where
          $\hat{f}\left(\boldsymbol{x}\right)$ and $-z\left(\boldsymbol{x}\right)$ are the mean and the standard deviation of the predictive distribution (see Section \ref{subsec:Surrogate_models_GP}),
          while $\xi \in \mathbb{R}_{\geq 0}$ is a trade-off parameter that needs to be tuned.
          In \cite{kushner1964new}, $\xi$ is initialized to a high value so that the algorithm
          prioritizes exploration in the early iterations and gets progressively smaller to give more importance to the surrogate later on.
\end{itemize}
In this work, we use the \textit{greedy $\delta$-cycling} strategy, which we proposed in \cite{previtali2022glispr} and we now briefly review.
We define a set of $N_{cycle} \geq 1$ weights to cycle:
\begin{equation}
    \label{eq:Delta_cycle}
    \Delta_{cycle} = \left\{\delta_0, \ldots, \delta_{N_{cycle - 1}}\right\}.
\end{equation}
The set $\Delta_{cycle}$ should contain values that are well spread within the $\left[0, 1\right]$ range as to properly alternate between local and global search.
Then, as long as $\boldsymbol{x_{best}}\left(N\right)$ varies from an iteration to the other (i.e. there has been some improvement), hyperparameter $\delta$ in \eqref{eq:Acquisition_function_no_black_box_constraints_v1} is kept unchanged.
Viceversa, whenever the algorithm produces an $\boldsymbol{x}_{N+1}$ that is not better than the best sample found so far $\boldsymbol{x_{best}}\left(N\right)$, the weight is cycled following the order proposed in $\Delta_{cycle}$. More formally, suppose that, at iteration $k$, we have at our disposal $\left|\mathcal{X}\right| = N$ samples and denote the trade-off parameter $\delta$ in \eqref{eq:Acquisition_function_no_black_box_constraints_v1} as $\delta\left(k\right)$ to highlight the iteration number. Furthermore, assume $\delta \left(k\right) = \delta_j \in \Delta_{cycle}$, which has been used to find the new candidate sample $\boldsymbol{x}_{N+1}$ at iteration $k$ by solving Problem \eqref{eq:Next_sample_search_no_black-box_constraints}. Then, at iteration $k + 1$, we select $\delta \left(k+1\right) \in \Delta_{cycle}$ as:
\begin{equation*}
    \delta \left(k + 1\right) = \begin{cases}
        \delta_j & \text{If $\boldsymbol{x_{best}}\left(N + 1\right) = \boldsymbol{x}_{ N+ 1}$} \\
        \delta_{\left(j + 1\right)\text{mod}{N_{cycle}}} & \text{If $\boldsymbol{x_{best}}\left(N + 1\right) = \boldsymbol{x_{best}}\left(N\right)$} 
    \end{cases}
\end{equation*}
The convergence of the optimization scheme that we propose in the next Section is strictly
related to the choice of the cycling set \eqref{eq:Delta_cycle}.

\section{General optimization scheme and convergence}
\label{sec:General_optimization_scheme}
Algorithm \ref{alg:General_surrogate-based_scheme} describes a general procedure that can be used to solve Problem \eqref{eq:black_box_optimization_problem_no_constraints}, either in the black-box or preference-based framework.
We will refer to the proposed scheme as \textit{generalized Metric Response Surface} (\texttt{gMRS} for short) since it can be seen as an extension of the \texttt{MSRS} \cite{Regis2007} procedure.
Differently from \texttt{MSRS} \cite{Regis2007}, \texttt{gMRS}
\textit{can handle both optimization frameworks and different exploration functions}.

As with any surrogate-based method, \texttt{gMRS} starts from an initial set of samples $\mathcal{X}$ that needs to be generated using a suitable \textit{space-filling experimental design} \cite{vu2017surrogate}, for example
Latin Hypercube Designs (LHDs) \cite{mckay2000comparison}.
Then, the samples in $\mathcal{X}$ are evaluated either by measuring the value of the black-box cost function $f\left(\boldsymbol{x}\right)$ or by asking the individual to compare them. In any case, the initial best sample $\boldsymbol{x_{best}}\left(N \right)$ is obtained, either as the one that achieved the lowest $y_i \in \mathcal{Y}$ or by properly guiding the comparisons, using the transitive property of the preference relation $\succsim$ (see Section \ref{sec:Problems_formulation}). Iteratively, until the budget $N_{max}$ is exhausted, the surrogate model $\hat{f}\left(\boldsymbol{x}\right)$ is built (or updated) and, together with a proper exploration function $z\left(\boldsymbol{x}\right)$, used to find a new candidate sample $\boldsymbol{x}_{N+1}$ by solving Problem \eqref{eq:Next_sample_search_no_black-box_constraints}.
The sample $\boldsymbol{x}_{N+1}$, suggested by the algorithm, replaces the best sample found so far, $\boldsymbol{x_{best}}\left(N\right)$, either if
$$y_{N+1} \leq y_{best}\left(N\right),$$
where  $y_{best}\left(N\right)$ is the measure of the black-box cost function at $\boldsymbol{x_{best}}\left(N\right)$, or if
$$\boldsymbol{x}_{N+1} \succ \boldsymbol{x_{best}}\left(N\right).$$
After that, the information brought by $\boldsymbol{x}_{N+1}$ is added to the respective sets $\mathcal{X}, \mathcal{Y}, \mathcal{B}$ and $\mathcal{S}$.

Note that $\hat{f}\left(\boldsymbol{x}\right)$ possibly contains some hyperparameters that might need to be {recalibrated}.
In the case of RBF surrogates, this can be done by employing \textit{cross-validation} (see \cite{rippa1999algorithm, cavoretto2021search} for black-box optimization and \cite{Bemporad2021} for the preference-based case).
Instead, for GP surrogates, we can \textit{maximize the marginal likelihood} (see \cite{rasmussen2006gaussian} and \cite{chu2005preference} for black-box and preference-based optimization respectively). Recalibration might not be performed at every iteration but only at certain ones.

\begin{remark}
    Further algorithmic details, such as the possibility of rescaling the decision variable $\boldsymbol{x}$
    (see for example \cite{Bemporad2020}) or handling the case when $\boldsymbol{x}_{N+1}$
    returned by Problem \eqref{eq:Next_sample_search_no_black-box_constraints} has already been tried,
    i.e. $\boldsymbol{x}_{N+1} \in \mathcal{X}$ (this could happen if $\delta = 1$),
    are not covered in Algorithm \ref{alg:General_surrogate-based_scheme} but can easily be included.
\end{remark}

\begin{algorithm*}[h]
    \setstretch{1.35}
    \caption{\texttt{gMRS} optimization scheme}
    \label{alg:General_surrogate-based_scheme}
    \textbf{Input}:
    \begin{enumerate}[(i)]
        \item Constraint set $\Omega$ in \eqref{eq:constraint_set_Omega},
        \item Initial number of samples $N$ (must be greater than $2$ in the preference-based case),
        \item Budget $N_{max} > N$,
        \item Surrogate model $\hat{f}\left(\boldsymbol{x}\right)$ (Section \ref{sec:Surrogate_models}) with, possibly, its hyperparameters,
        \item Proper exploration function $z\left(\boldsymbol{x}\right)$ (Section \ref{subsec:Exploration_functions}),
        \item Exploration-exploitation trade-off cycle $\Delta_{cycle}$ (Section \ref{subsec:Greedy_delta_cycling}).
    \end{enumerate}
    \textbf{Output}:
    \begin{enumerate}[(i)]
        \item Best sample obtained by the procedure $\boldsymbol{x_{best}}\left(N_{max}\right)$.
    \end{enumerate}
    \hrule
    \begin{algorithmic}[1]
        \State Select a set of starting points $\mathcal{X}$, $\left|\mathcal{X}\right| = N$,
        using a suitable experimental design \cite{vu2017surrogate}
        \State Evaluate the samples in $\mathcal{X}$, obtaining some information on the cost function $f\left(\boldsymbol{x}\right)$
        (either set $\mathcal{Y}$ in \eqref{eq:function_measure_set_Y}
        or sets $\mathcal{B}$ and $\mathcal{S}$ in \eqref{eq:Set_of_preferences_B}
        and \eqref{eq:Mapping_set_for_preferences_S}),
        and get the initial best sample $\boldsymbol{x_{best}}\left(N\right)$
        \For{$k = 1, 2, \ldots, N_{max} - \left|\mathcal{X}\right|$}
        
        \State (Optional) Recalibrate the hyperparameters of the surrogate model
        $\hat{f}\left(\boldsymbol{x}\right)$
        \State Build or update surrogate model $\hat{f}\left(\boldsymbol{x}\right)$ from
        $\mathcal{X}$ and the information on $f\left(\boldsymbol{x}\right)$ at hand
        \State Build the augmented sample set $\mathcal{X}_{aug}$
        \State Select $\delta$ for the current iteration from $\Delta_{cycle}$ (Section \ref{subsec:Greedy_delta_cycling})
        \State Solve Problem \eqref{eq:Next_sample_search_no_black-box_constraints} to obtain
        the new candidate sample $\boldsymbol{x}_{N+1}$
        \State Either measure the value of the cost function for $\boldsymbol{x}_{N+1}$
        or let the human decision-maker express a preference between $\boldsymbol{x}_{N+1}$ and $\boldsymbol{x_{best}}\left(N\right)$
        \If{$\boldsymbol{x}_{N+1}$ achieved a better result than $\boldsymbol{x_{best}}\left(N\right)$}
        \State Set $\boldsymbol{x_{best}}\left(N + 1\right) = \boldsymbol{x}_{N+1}$
        \Else
        \State Set $\boldsymbol{x_{best}}\left(N + 1\right) = \boldsymbol{x_{best}}\left(N\right)$ (no improvement)
        \EndIf
        \State Update the set of samples $\mathcal{X}$ and either the
        collection of measures $\mathcal{Y}$ or the
        user-expressed preferences $\mathcal{B}$ and $\mathcal{S}$.
        \State Set $N = N + 1$
        \EndFor
    \end{algorithmic}
\end{algorithm*}

\subsection{Convergence of \texttt{gMRS}}
\label{subsec:Convergence}
It is possible to guarantee the convergence of any global optimization algorithm to the global minimizer of
Problem \eqref{eq:black_box_optimization_problem_no_constraints} by proving the following Theorem.
\begin{theorem}[Convergence of a global optimization algorithm
        \cite{torn1989global}]
    \label{theo:convergence_theorem_torn}
    Consider the global optimization problem in
    \eqref{eq:black_box_optimization_problem_no_constraints}.
    Let $\Omega \subset \mathbb{R}^{n}$ be a compact set and $f:
        \mathbb{R}^{n} \to \mathbb{R}$ be a continuous function.
    Then, an algorithm converges to the global minimum of every continuous
    function on $\Omega$ if and only if its sequence of iterates,
    \begin{equation*}
        \langle\boldsymbol{x}_i\rangle_{i \geq 1} = \langle\boldsymbol{x}_1, \boldsymbol{x}_2, \ldots \rangle,
    \end{equation*}
    is everywhere dense in $\Omega$.
\end{theorem}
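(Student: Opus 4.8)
The plan is to prove the two implications of the equivalence separately, after first fixing the meaning of ``convergence.'' Writing $f^{*} = \min_{\boldsymbol{x}\in\Omega} f(\boldsymbol{x})$ --- which exists by the Extreme Value Theorem, since $\Omega$ is compact and $f$ is continuous --- I interpret convergence as $\min_{i\leq k} f(\boldsymbol{x}_i) \to f^{*}$ as $k\to\infty$, i.e. the running record value attains the global minimum in the limit. Throughout, the sequence $\langle \boldsymbol{x}_i \rangle_{i\geq 1}$ is regarded as the generated sequence of trial points, and it is the quantifier ``every continuous function'' that ultimately forces density.

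For sufficiency (density $\Rightarrow$ convergence, the direction that actually certifies \texttt{gMRS}) I would fix any continuous $f$ and let $\boldsymbol{x}^{\boldsymbol{*}} \in \Omega$ be a global minimizer. Given $\varepsilon > 0$, continuity of $f$ at $\boldsymbol{x}^{\boldsymbol{*}}$ yields a radius $\rho > 0$ with $f(\boldsymbol{x}) < f^{*} + \varepsilon$ whenever $\euclideannorm{\boldsymbol{x} - \boldsymbol{x}^{\boldsymbol{*}}} < \rho$. Since $\langle \boldsymbol{x}_i \rangle_{i\geq 1}$ is dense in $\Omega$, some iterate $\boldsymbol{x}_{i_0}$ lies in that ball, so $f(\boldsymbol{x}_{i_0}) < f^{*} + \varepsilon$. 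For every $k \geq i_0$ the record value satisfies $f^{*} \leq \min_{i\leq k} f(\boldsymbol{x}_i) \leq f(\boldsymbol{x}_{i_0}) < f^{*} + \varepsilon$; being non-increasing and bounded below by $f^{*}$, it converges to $f^{*}$. As $\varepsilon$ is arbitrary this holds for the fixed $f$, and since $f$ was arbitrary it holds for every continuous function.

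For necessity (convergence $\Rightarrow$ density) I would argue by contraposition. If the sequence is \emph{not} everywhere dense, its closure is a proper closed subset of $\Omega$, so I may pick $\bar{\boldsymbol{x}} \in \Omega$ outside this closure together with a radius $r > 0$ such that $\euclideannorm{\boldsymbol{x}_i - \bar{\boldsymbol{x}}} \geq r$ for all $i$. I would then exhibit a single continuous function on which the record value never approaches its global minimum, namely the ``pit'' function
\begin{equation*}
    f(\boldsymbol{x}) = \min\left\{\frac{\euclideannorm{\boldsymbol{x} - \bar{\boldsymbol{x}}}}{r} - 1,\ 0\right\},
\end{equation*}
which is continuous as a minimum of two continuous functions, equals $0$ outside the open ball $B(\bar{\boldsymbol{x}}, r)$, and attains its global minimum $f^{*} = -1$ at $\bar{\boldsymbol{x}}$. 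Because every iterate satisfies $\euclideannorm{\boldsymbol{x}_i - \bar{\boldsymbol{x}}} \geq r$, we get $f(\boldsymbol{x}_i) = 0$ for all $i$, hence $\min_{i\leq k} f(\boldsymbol{x}_i) = 0 \neq -1 = f^{*}$ for every $k$. Thus the algorithm fails to converge on this continuous function, contradicting the hypothesis, so the sequence must be dense.

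The main obstacle is the necessity direction: the sufficiency part is a routine continuity argument, whereas necessity requires producing an explicit continuous counterexample whose minimizer sits inside the avoided ball while its values on the entire iterate set stay bounded away from the optimum, and stating the claim at the right level of generality. In particular, one must treat the generated sequence as fixed against the \emph{full} class of continuous functions, so that ``guaranteed convergence for all $f$'' genuinely forces density rather than being merely compatible with a lucky, function-specific sampling pattern.
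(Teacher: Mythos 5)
The paper never proves this theorem: it is imported verbatim from T\"orn and \v{Z}ilinskas \cite{torn1989global} and used as a black box inside the proof of Theorem \ref{theo:Convergence_of_gMRS}, so there is no internal proof to compare yours against. Judged on its own merits, your reading of ``convergence'' (the record value $\min_{i\leq k} f(\boldsymbol{x}_i)$ tending to $f^{*}$) is the standard one, and your sufficiency direction (density $\Rightarrow$ convergence) is correct; notably, that is the only direction the paper actually uses, since the proof of Theorem \ref{theo:Convergence_of_gMRS} establishes density of the iterates and then invokes this theorem.

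The necessity direction, however, has a genuine gap, and it is precisely the point you tried to settle by fiat in your closing paragraph. The algorithms this theorem is meant to cover --- including \texttt{gMRS} itself --- are \emph{adaptive}: the next iterate depends on the function values (or preferences) observed so far, so ``the sequence of iterates'' is not a single fixed object but depends on the objective being optimized. Your contrapositive fixes a sequence that is not dense, extracts an avoided ball $B(\bar{\boldsymbol{x}},r)$, and then evaluates your pit function along that same sequence. But the sequence avoiding the ball is the one generated when the algorithm runs on some particular objective $f_0$; run on the pit function instead, the algorithm may produce a completely different sequence --- one that finds the pit. Declaring the sequence ``fixed against the full class of continuous functions'' is not a justification: it silently restricts the theorem to non-adaptive (passive) sampling schemes, which excludes the very algorithm the paper applies it to. The standard repair, and the idea behind the cited result, is to build the counterexample as a modification of $f_0$ itself: set $\tilde{f} = \min\{f_0,\, p\}$ where $p$ grows affinely in $\euclideannorm{\boldsymbol{x}-\bar{\boldsymbol{x}}}$, with $p(\bar{\boldsymbol{x}}) < \min_{\boldsymbol{x}\in\Omega} f_0(\boldsymbol{x})$ and $p \geq \max_{\boldsymbol{x}\in\Omega} f_0(\boldsymbol{x}) + 1$ outside $B(\bar{\boldsymbol{x}},r)$. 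Then $\tilde{f}$ is continuous and agrees with $f_0$ at every point the algorithm ever queries, so a deterministic adaptive algorithm run on $\tilde{f}$ reproduces, by induction over its iterations, exactly the same non-dense sequence, while the global minimum of $\tilde{f}$ lies inside the ball strictly below every value the algorithm observes. That supplies the continuous function on which convergence fails; your standalone pit function does not.
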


Concerning Algorithm \ref{alg:General_surrogate-based_scheme}, we can generalize the convergence result obtained for \texttt{GLISp-r} in \cite{previtali2022glispr} to \texttt{gRMS}, as claimed by the following Theorem.
\begin{theorem}[Convergence of \texttt{gMRS}]
    \label{theo:Convergence_of_gMRS}
    Let $\Omega \subset \mathbb{R}^{n}$ be a compact set and either:
    \begin{itemize}
        \item $f:\mathbb{R}^{n} \to \mathbb{R}$ be a continuous function (black-box case) or,
        \item $\succsim$ be a continuous and complete preference relation (preference-based case).
    \end{itemize}
    If $z\left(\boldsymbol{x}\right)$ is a proper exploration function, as defined in Definition \ref{def:Proper_exploration_function}, and there $\exists \delta_j \in \Delta_{cycle}$ such that $\delta_j = 0$, then, for $N_{max} \to \infty$, \texttt{gMRS} converges to the global minimizer of Problem \eqref{eq:black_box_optimization_problem_no_constraints} for any set of initial points $\mathcal{X}$, $\left| \mathcal{X} \right| = N$, as well as any continuous surrogate model $\hat{f}\left(\boldsymbol{x}\right)$.
\end{theorem}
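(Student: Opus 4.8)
The plan is to collapse both frameworks into a single statement about minimizing a continuous function on a compact set and then invoke the density characterization of Theorem~\ref{theo:convergence_theorem_torn}. First I would dispose of the preference-based case by reduction to the black-box one: since $\succsim$ is continuous and complete on the compact $\Omega$, Theorem~\ref{theo:debreu_utility_representation} supplies a continuous utility $u_{\scriptscriptstyle\succsim}$, and setting $f(\boldsymbol{x}) = -u_{\scriptscriptstyle\succsim}(\boldsymbol{x})$ turns Problem~\eqref{eq:preference_relation_maximum_optimization_problem} into a genuine instance of Problem~\eqref{eq:black_box_optimization_problem_no_constraints} with a continuous objective (extended to $\mathbb{R}^{n}$ by the Tietze theorem if needed), whose solution exists by Proposition~\ref{prop:existence_of_preference_relation_maximum}. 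From this point the two frameworks are indistinguishable, so it suffices to treat a continuous $f$ on compact $\Omega$. By Theorem~\ref{theo:convergence_theorem_torn}, \texttt{gMRS} converges to the global minimizer of \emph{every} continuous $f$ if and only if its sequence of iterates $\langle\boldsymbol{x}_i\rangle_{i\geq1}$ is everywhere dense in $\Omega$; hence the whole proof reduces to establishing density, and the argument must use no property of the particular surrogate $\hat{f}$.

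The engine that produces density is the pure-exploration step, so I would first show that $\delta = 0$ is selected infinitely often as $N_{max}\to\infty$. I would argue by contradiction: if $\delta = 0$ occurred only finitely many times, there would be an iteration past which $\delta(k)\neq0$ forever. By the greedy $\delta$-cycling rule, $\delta$ advances one position in $\Delta_{cycle}$ exactly at the no-improvement iterations, and since $\delta_j=0\in\Delta_{cycle}$ sits somewhere in the cycle, infinitely many no-improvement steps would drive $\delta$ onto $0$ infinitely often, a contradiction. Thus only finitely many no-improvement steps could occur, leaving a ``perpetual-improvement'' tail in which $\boldsymbol{x_{best}}$ is replaced at every late iteration and $\delta$ stays frozen. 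This tail is the subtle case, and I expect to close it by the convention that a replacement counts as an improvement only under a strict decrease of the best value (so the cycle advances once progress stalls), together with the fact that the best-value sequence $y_{best}(N)$ is non-increasing and bounded below by $\min_{\boldsymbol{x}\in\Omega}f(\boldsymbol{x})$ on the compact $\Omega$, so strict improvements cannot recur indefinitely.

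The core of the argument is then to convert ``infinitely many $\delta=0$ steps'' into density. At such a step the acquisition function \eqref{eq:Acquisition_function_no_black_box_constraints_v1} reduces to the rescaled exploration function, so $\boldsymbol{x}_{N+1}$ solves Problem~\eqref{eq:Next_sample_search_only_exploration_no_black-box_constraints}; by Definition~\ref{def:Proper_exploration_function} this minimizer is attained (continuity of $z$ and compactness of $\Omega$) and is genuinely new, $\boldsymbol{x}_{N+1}\notin\mathcal{X}$. I would then suppose the iterates are not dense, so that some ball $B(\bar{\boldsymbol{x}},r)\cap\Omega$ of positive radius contains only finitely many iterates and eventually every sample stays at distance $\geq r/2$ from $\bar{\boldsymbol{x}}$. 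For the canonical proper exploration functions, the \texttt{MSRS} distance \eqref{eq:Exploration_function_MSRS}, the IDW function \eqref{eq:Inverse_distance_weighting_distance_function}, and the negative GP standard deviation, the pure-exploration minimizer is precisely the point of $\Omega$ most isolated from $\mathcal{X}$, so an unfilled ball would repeatedly attract the search and force infinitely many samples that are pairwise $\geq r/2$ apart inside the compact $\Omega$, contradicting total boundedness; equivalently, the dispersion (fill distance) of $\mathcal{X}$ is driven to $0$, which is exactly density. The hard part, and the step I expect to demand the most care, is precisely this implication: proving that repeated pure exploration with a proper exploration function makes the dispersion vanish. The bare ``adds a new point'' property of Definition~\ref{def:Proper_exploration_function} is not by itself enough to preclude the samples from accumulating away from an unvisited region, so the argument has to lean on the space-filling structure of $z$ together with compactness to rule out such accumulation.
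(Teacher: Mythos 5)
Your skeleton is the same as the paper's: reduce the preference-based case to the black-box one via Theorem \ref{theo:debreu_utility_representation} and Proposition \ref{prop:existence_of_preference_relation_maximum}, then invoke Theorem \ref{theo:convergence_theorem_torn} so that everything hinges on density of the iterates in $\Omega$, with the pure-exploration step ($\delta = 0$) as the engine of density. However, your handling of the cycling contains a false step. To rule out the ``perpetual-improvement tail'' you argue that, since $y_{best}\left(N\right)$ is non-increasing and bounded below on the compact $\Omega$, strict improvements cannot recur indefinitely. That is not true: a bounded non-increasing sequence can strictly decrease at every step (improvements of size $2^{-k}$, say), so a tail in which every iteration improves and $\delta$ stays frozen at a nonzero value is perfectly consistent with your premises, and your contradiction never materializes. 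For what it is worth, the paper does not resolve this either: its proof treats only $\Delta_{cycle} = \left\{0\right\}$, where $\delta = 0$ is used at every iteration by fiat, and then asserts in a single unargued sentence that the same conclusion holds for any $\Delta_{cycle}$ containing a zero entry; no claim that greedy cycling selects $\delta_j = 0$ infinitely often is ever established there.

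The second difficulty you flag --- that the bare property in Definition \ref{def:Proper_exploration_function} (the minimizer of Problem \eqref{eq:Next_sample_search_only_exploration_no_black-box_constraints} is not already in $\mathcal{X}$) is too weak to force density --- is genuine, and it is exactly the point where the paper's own proof is weakest. The paper argues that $\boldsymbol{x}_{k+1} \notin \mathcal{X}_k$ for all $k$ ``implies that, given any $\boldsymbol{x} \in \Omega$, $\boldsymbol{x} \in \mathcal{X}_k$ for $k \to \infty$,'' i.e.\ every point of $\Omega$ is eventually sampled exactly; this is a non sequitur (indeed countably many iterates cannot exhaust an uncountable $\Omega$), and newness of each iterate by itself does not prevent the samples from accumulating inside a small ball while never visiting the rest of $\Omega$. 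Your proposed repair --- using the space-filling structure of the specific exploration functions \eqref{eq:Exploration_function_MSRS}, \eqref{eq:Inverse_distance_weighting_distance_function} or the negative GP standard deviation to drive the fill distance of $\mathcal{X}_k$ to zero --- would prove convergence for those choices of $z\left(\boldsymbol{x}\right)$, but not the theorem as stated, which quantifies over \emph{all} proper exploration functions. So your proposal is incomplete, but honestly so: the step you call ``the hard part'' is the step the paper glosses over with an invalid implication, and closing it requires either strengthening Definition \ref{def:Proper_exploration_function} (e.g.\ demanding that repeated pure exploration makes the dispersion of $\mathcal{X}_k$ vanish, or that the new sample maximize the distance to $\mathcal{X}_k$ as in \eqref{eq:Exploration_function_MSRS}) or restricting the statement to such exploration functions.
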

\begin{proof}
    In the black-box framework, continuity of $f\left(\boldsymbol{x}\right)$ and compactness of $\Omega$ ensure that there exists a global minimizer for Problem \eqref{eq:black_box_optimization_problem_no_constraints} (Extreme Value Theorem \cite{audet2017derivative}) and are required for Theorem \ref{theo:convergence_theorem_torn}. Similarly, continuity and completeness of $\succsim$ guarantee that there exists a $\succsim$-maximum of $\Omega$ for Proposition \ref{prop:existence_of_preference_relation_maximum}. Moreover, from Theorem \ref{theo:debreu_utility_representation}, there exists a continuous scoring function $f\left( \boldsymbol{x}\right)$ that represents $\succsim$ and such that solving Problem \eqref{eq:black_box_optimization_problem_no_constraints} leads to find the $\succsim$-maximum of $\Omega$. In turn, this makes it possible to apply Theorem \ref{theo:convergence_theorem_torn} also in the preference-based framework.

    Consider the sequence of iterates $\langle\boldsymbol{x}_i\rangle_{i \geq 1}$ produced by Algorithm \ref{alg:General_surrogate-based_scheme}. We define
    \begin{itemize}
        \item $\mathcal{X}_\infty$ as the set containing all the elements of $\langle\boldsymbol{x}_i\rangle_{i \geq 1}$,
        \item The subsequence of $\langle\boldsymbol{x}_i\rangle_{i \geq 1}$ containing only its first $k$ entries as $\langle\boldsymbol{x}_i\rangle_{i = 1}^k = \langle\boldsymbol{x}_1, \ldots, \boldsymbol{x}_k\rangle$,
        \item $\mathcal{X}_k$ as the collection of the points in $\langle\boldsymbol{x}_i\rangle_{i = 1}^k$.
    \end{itemize}
    In practice, the first $N$ entries of $\langle\boldsymbol{x}_i\rangle_{i \geq 1}$ constitute the initial set of samples $\mathcal{X}$ (obtained by an experimental design), i.e. $\mathcal{X}_N = \mathcal{X}$, while the remaining ones are obtained by solving Problem \eqref{eq:Next_sample_search_no_black-box_constraints}, which always admits a solution since both $\hat{f}\left(\boldsymbol{x}\right)$ and $z\left(\boldsymbol{x}\right)$ are assumed to be continuous. Any sample $\boldsymbol{x}_i$ obtained either by the experimental design or by solving Problem \eqref{eq:Next_sample_search_no_black-box_constraints} is such that $\boldsymbol{x}_i \in \Omega$, therefore $\mathcal{X}_\infty \subseteq \Omega$.

    Suppose now that $\Delta_{cycle} = \left\{0\right\}$, then, at each iteration, the new candidate sample $\boldsymbol{x}_{k+1}$ ($k > N$) is found by solving
    Problem \eqref{eq:Next_sample_search_only_exploration_no_black-box_constraints} (pure exploration) using $k$ samples (contained in $\mathcal{X}_k$).
    Since $z\left(\boldsymbol{x}\right)$ is a proper exploration function, $\boldsymbol{x}_{k+1} \notin \mathcal{X}_{k}$, which implies that, given any $\boldsymbol{x} \in \Omega$, $\boldsymbol{x} \in \mathcal{X}_k$ for $k \to \infty$.
    In other words, any point $\boldsymbol{x} \in \Omega$ will eventually be sampled by Problem \eqref{eq:Next_sample_search_only_exploration_no_black-box_constraints}, provided that $z\left(\boldsymbol{x}\right)$ is proper. Thus, we can define a sequence
    \begin{equation*}
        \langle\boldsymbol{\tilde{x}}_i\rangle_{i \geq 1} = \langle\boldsymbol{\tilde{x}}_1, \boldsymbol{\tilde{x}}_2, \ldots \rangle
    \end{equation*}
    in $\mathcal{X}_{\infty}$ as the concatenation of a  sequence $\langle\boldsymbol{x}_i\rangle_{i = 1}^k$ for $k$ such that $\boldsymbol{x}_{k} = \boldsymbol{x}$ and a constant sequence of $\boldsymbol{x}$, i.e.
    \begin{equation*}
        \langle\boldsymbol{\tilde{x}}_i\rangle_{i \geq 1} = \langle\boldsymbol{x}_1, \ldots, \boldsymbol{x}_{k-1}, \boldsymbol{x}, \boldsymbol{x}, \ldots \rangle.
    \end{equation*}
    By construction, $\langle\boldsymbol{\tilde{x}}_i\rangle_{i \geq 1}$ is such that
    \begin{equation}
        \label{eq:Limit_of_sequence_x_tilde_convergence}
        \lim_{i \to \infty} \boldsymbol{\tilde{x}}_i = \boldsymbol{x}.
    \end{equation}
    We have proven that:
    \begin{itemize}
        \item $\mathcal{X}_\infty \subseteq \Omega$,
        \item Given any $\boldsymbol{x} \in \Omega$, there exists a sequence $\langle\boldsymbol{\tilde{x}}_i\rangle_{i \geq 1}$ in $\mathcal{X}_{\infty}$ which satisfies \eqref{eq:Limit_of_sequence_x_tilde_convergence}.
    \end{itemize}
    Thus, we can conclude that $\mathcal{X}_\infty$ is dense in $\Omega$ \cite{kelley2017general} and, consequently, so is the corresponding sequence of iterates $\langle\boldsymbol{x}_i\rangle_{i \geq 1}$.
    Finally, by Theorem \ref{theo:convergence_theorem_torn}, \texttt{gMRS} converges to the global minimizer of
    Problem \eqref{eq:black_box_optimization_problem_no_constraints}.
    We can reach the same conclusion for any $\Delta_{cycle}$ that includes a zero entry.
\end{proof}

\begin{remark}
    Combining the utility theory framework \cite{ok2011real} with preference-based optimization allows us to extend Theorem \ref{theo:convergence_theorem_torn} as to cover the convergence to the $\succsim$-maximum of $\Omega$. In this case, we must ensure that the preference relation admits a continuous representation (Theorem \ref{theo:debreu_utility_representation}) and we need to guarantee that a $\succsim$-maximum of $\Omega$ exists (Proposition \ref{prop:existence_of_preference_relation_maximum}). Under these assumptions, we are able to prove the convergence of \texttt{gRMS} in the preference-based case. Instead, other preference-based algorithms often neglect a formal proof of convergence.
\end{remark}

\begin{remark}
    Theorem \ref{theo:Convergence_of_gMRS} guarantees the convergence
    of \texttt{gRMS} but does not give any indication
    on its rate. In practice, it depends on a multitude of factors, such as the choice
    of the surrogate model, exploration function and cycling set.
    Setting $\Delta_{cycle} = \left\{0\right\}$ basically results in performing exhaustive
    search \cite{audet2017derivative}, which is quite inefficient but is guaranteed to
    converge to the minimizer of Problem \eqref{eq:black_box_optimization_problem_no_constraints}
    under the assumptions of Theorem \ref{theo:Convergence_of_gMRS}.
    We suggest to use a $\Delta_{cycle}$ in \eqref{eq:Delta_cycle}
    that contains values which are well spread within the $\left[0, 1\right]$ range,
    including a zero entry to guarantee the convergence.
\end{remark}

\section{Illustrative example}
\label{sec:example}
Suppose that we want to solve the following global optimization problem:
\begin{align*}
    \boldsymbol{x}^{\boldsymbol{*}} & = \argmin{\boldsymbol{x}} f\left(\boldsymbol{x}\right) \\
    \text{s.t.}                     &  \quad \begin{bmatrix}
        -1, -1
    \end{bmatrix}^\top \leq \boldsymbol{x} \leq \begin{bmatrix}
        2, 1
    \end{bmatrix}^\top
\end{align*}
where the cost function $f\left(\boldsymbol{x}\right)$ is the \texttt{adjiman} function in \cite{jamil2013literature}, i.e. 
$$f\left(\boldsymbol{x}\right) = \cos\left(x^{(1)}\right)\cdot\sin\left(x^{(2)}\right)-\frac{x^{(1)}}{\left(x^{(2)}\right)^{2}+1}.$$
We show the performances of two algorithms that follow the \texttt{gRMS} paradigm (Algorithm \ref{alg:General_surrogate-based_scheme}), in the black-box and preference-based frameworks respectively.
For this example, we assume that, in the black-box case, we are able to measure $f\left(\boldsymbol{x}\right)$ without noise ($\sigma_\eta^2 = 0$). We approximate $f\left(\boldsymbol{x} \right)$ using surrogate model \eqref{eq:RBF_surrogate_model} with $\boldsymbol{\beta}$ computed as in \texttt{GLIS} \cite{Bemporad2020}.
Viceversa, in the preference-based framework, we use the preference function in \eqref{eq:preference_function} to compare different samples. We still use $\hat{f}\left(\boldsymbol{x}\right)$ in \eqref{eq:RBF_surrogate_model} but find $\boldsymbol{\beta}$ by solving Problem \eqref{eq:Beta_computation_RBF_preference_based_optimization}, as it is done for \texttt{GLISp} \cite{Bemporad2021}. In both cases, we use $z\left(\boldsymbol{x}\right)$ in \eqref{eq:Inverse_distance_weighting_distance_function} and define the augmented sample set $\mathcal{X}_{aug}$ using some information on the stationary points of the chosen exploration function, as proposed in \cite{previtali2022glispr}. Moreover, we adopt the same cycling set $\Delta_{cycle} = \left\{0.95, 0.7, 0.35, 0\right\}$ for black-box and preference-based optimization.

We compare the previously described instances of \texttt{gRMS} to \texttt{GLIS} \cite{Bemporad2020} and \texttt{GLISp} \cite{Bemporad2021} since they both use the same surrogate models and exploration functions\footnote{Formally, \texttt{GLIS} \cite{Bemporad2020} uses an additional exploration function $s\left(\boldsymbol{x}\right)$, called the IDW variance function, and thus its acquisition function is defined as a weighted sum between $\hat{f}\left(\boldsymbol{x}\right)$, $z\left(\boldsymbol{x}\right)$ and $s\left(\boldsymbol{x}\right)$} but employ different acquisition functions. For this reason, we refer to them as \texttt{GLIS-r} and \texttt{GLISp-r}, where the \texttt{r} highlights the min-max rescaling performed in \eqref{eq:Acquisition_function_no_black_box_constraints_v1}. We use the same hyperparameters for the surrogates of \texttt{GLIS} \cite{Bemporad2020} and \texttt{GLIS-r}, as well as \texttt{GLISp} \cite{Bemporad2021} and \texttt{GLISp-r} (see \cite{previtali2022glispr} for a more formal definition of this algorithm), and set them to the values proposed in their respective papers. The remaining hyperparameters for \texttt{GLIS} \cite{Bemporad2020} and \texttt{GLISp} \cite{Bemporad2021} are selected as suggested by the authors. We remark that, in the original methods, no cycling is performed for their respective exploration-exploitation trade-off parameters. We perform $N_{MC} = 100$ Monte Carlo simulations starting from different sets of samples and with budget $N_{max} = 70$. Moreover, in the black-box framework we start from $4$ samples while in the preference-based one we begin from $8$ samples and $7$ preferences. The initial sample set $\mathcal{X}$ is generated using a Latin Hypercube Design \cite{mckay2000comparison}.
Figure \ref{fig:example} depicts the results of the Monte Carlo simulations. In the black-box framework, \texttt{GLIS} \cite{Bemporad2020} and \texttt{GLIS-r} exhibit similar performances (same convergence speed). Instead, in the preference-based case, median-wise \texttt{GLISp} \cite{Bemporad2021} finds the global minimizer slightly faster compared to \texttt{GLISp-r} but can get stuck on a local minima (as highlighted by its worst-case performances), see \cite{previtali2022glispr} for a more in-depth look. Viceversa, cycling $\delta$ in \eqref{eq:Acquisition_function_no_black_box_constraints_v1} as proposed in Section \ref{subsec:Greedy_delta_cycling} leads \texttt{GLISp-r} to converge to $\boldsymbol{x^*}$ on all Monte Carlo simulations.
\begin{figure}[!ht]
    \centering
    \subfloat{
        \centering
        \includegraphics[width=.5\textwidth]{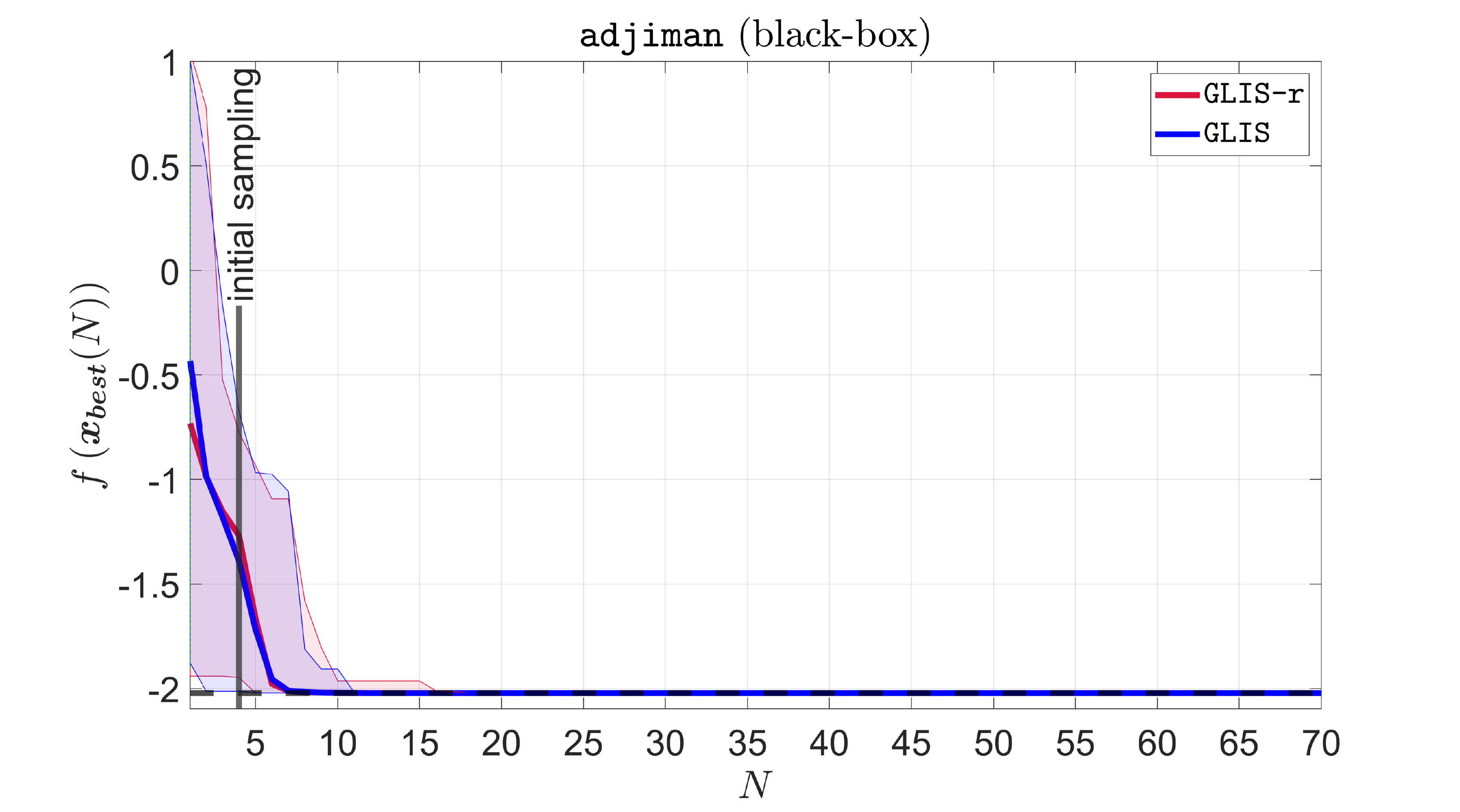}
    }
    \subfloat{
        \centering
        \includegraphics[width=.5\textwidth]{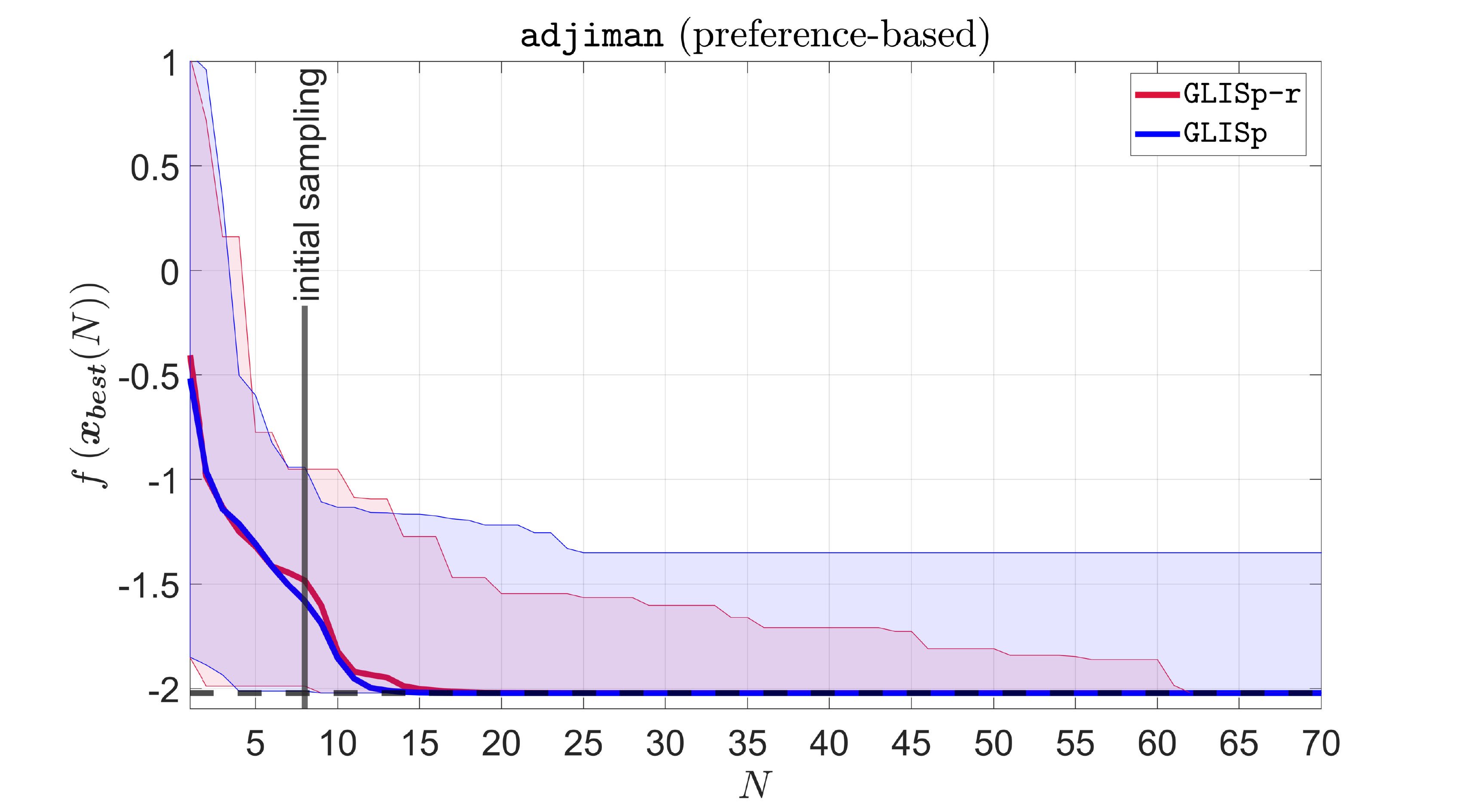}
    }

    \caption{
        \label{fig:example}
        Performance comparison between the different algorithms in the black-box and preference-based frameworks. The thick colored lines denote the median value, the shadowed areas remark the best and worst case instances, the dashed black line is the global minimum $f\left(\boldsymbol{x^*}\right)$ and the black vertical line divides the initial sampling phase and the one based on the minimization of the acquisition function.
    }
\end{figure}
\section{Conclusions}
\label{sec:Summary_and_discussion}
In this paper, we have thoroughly analyzed and compared the black-box and the preference-based optimization frameworks. Using utility theory, we have shown that, if $\succsim$ associated to the individual's criterion is a continuous and complete preference relation, then both black-box and preference-based algorithms aim to solve the same problem, that is Problem \eqref{eq:black_box_optimization_problem_no_constraints}. The only difference is the information available of the latent $f\left(\boldsymbol{x}\right)$.
We  focused our attention on surrogate-based methods, which approximate $f\left(\boldsymbol{x}\right)$ using only the data at hand. Then, we proposed a general acquisition function $a\left(\boldsymbol{x}\right)$ in  \eqref{eq:Acquisition_function_no_black_box_constraints_v1}, which is an explicit trade-off between the surrogate model $\hat{f}\left(\boldsymbol{x}\right)$ and a proper exploration function $z\left(\boldsymbol{x}\right)$, and shown how it relates to the ones used by other popular surface response methods. After that, we formalized \texttt{gMRS} (Algorithm \ref{alg:General_surrogate-based_scheme}), a general optimization scheme that can be used both in the black-box and preference-based frameworks. Its convergence is guaranteed provided that the chosen exploration function is a proper one and $\Delta_{cycle}$ includes at least a zero entry.

\bibliographystyle{plain}
\bibliography{References_books, References_Bayesian_papers, References_other_papers, References_RBF_papers}


\end{document}